\def\MT@register@subst@font{\MT@exp@one@n\MT@in@clist\font@name\MT@font@list
 \ifMT@inlist@\else\xdef\MT@font@list{\MT@font@list\font@name,}\fi}
\newcommand{\myitem}[1]{%
\item[(#1)]\protected@edef\@currentlabel{#1}%
}
\newcommand{\bit}{\begin{itemize}}    % but see also \benbullet below
\newcommand{\eit}{\end{itemize}}
\newcommand{\ben}{\begin{enumerate}}
\newcommand{\een}{\end{enumerate}}
\newcommand{\benroman}{\ben[\normalfont (i)]}  % *
\let\eroman\een
\newcommand{\bde}{\begin{description}}
\newcommand{\ede}{\end{description}}
\let\oper=\mathbb                               %  operators
\newcommand{\III}{\oper{I}}                     %  class operators
\newcommand{\SSS}{\oper{S}}                     %  class operators
\newcommand{\VVV}{\oper{V}}                     %  class operators
\newcommand{\res}{{\upharpoonright}}
\theoremstyle{theorem}
\newtheorem{Theorem}{Theorem}[section]
\newtheorem{Theorem-n}{Theorem}
\newtheorem{Proposition}[Theorem]{Proposition}
\newtheorem{Modal Sahlqvist Theorem}[Theorem]{Modal Sahlqvist Theorem}
\newtheorem{Intuitionistic Sahlqvist Theorem}[Theorem]{Intuitionistic  Sahlqvist Theorem}
\newtheorem{Esakia Duality}[Theorem]{Esakia Duality}
\newtheorem{Main Lemma}[Theorem]{Main Lemma}
\newtheorem{Compactness Theorem}[Theorem]{Compactness Theorem}
\newtheorem{Los Theorem}[Theorem]{\LL o\'s' Theorem}
\newtheorem{Isbell Theorem}[Theorem]{Isbell's Zigzag Theorem}
\newtheorem{Diagram Lemma}[Theorem]{Diagram Lemma}
\newtheorem{Transfer Lemma}[Theorem]{Transfer Lemma}
\newtheorem{Subdirect Decomposition Theorem}[Theorem]{Subdirect Decomposition Theorem}
\newtheorem{Corollary}[Theorem]{Corollary}
\newtheorem{Claim}[Theorem]{Claim}
\theoremstyle{definition}
\newtheorem{Definition}[Theorem]{Definition}
\newtheorem{exa}[Theorem]{Example}
\theoremstyle{remark}
\newtheorem{Remark}[Theorem]{Remark}
\crefname{Theorem}{Theorem}{Theorems}
\crefname{Proposition}{Proposition}{Propositions}
\crefname{Lemma}{Lemma}{Lemmas}
\crefname{Corollary}{Corollary}{Corollaries}
\crefname{Claim}{Claim}{Claims}
\crefname{Definition}{Definition}{Definitions}
\crefname{exa}{Example}{Examples}
\crefname{Remark}{Remark}{Remarks}
\crefname{Fact}{Fact}{Facts}
\crefname{exer}{Exercise}{Exercises}
\crefname{problem}{Problem}{Problems}
\crefname{Compactness Theorem}{Compactness Theorem}{Compactness Theorems}
\crefname{Los Theorem}{\LL o\'s' Theorem}{\LL o\'s' Theorems}
\crefname{Isbell Theorem}{Isbell's Zigzag Theorem}{Isbell's Zigzag Theorems}
\crefname{Diagram Lemma}{Diagram Lemma}{Diagram Lemmas}
\crefname{Subdirect Decomposition Theorem}{Subdirect Decomposition Theorem}{Subdirect Decomposition Theorems}
\let\leq=\leqslant
\let\nleq=\nleqslant
\let\geq=\geqslant 
 \let\mathscr\relax % just so we can load this and rsfs
\renewcommand{\int}{\mathsf{int}\,}
\bmdefine{\A}{A} 
\bmdefine{\C}{C}                                %  particular algebras
\bmdefine{\B}{B}
\bmdefine{\D}{D}
\bmdefine{\E}{E}
\bmdefine{\Term}{T} 
\bmdefine{\Free}{F}
\bmdefine{\Fb}{F}
\newcommand{\?}{\ensuremath{\mkern0.4\thinmuskip}}   % very small math space
\newcommand{\K}{\mathsf{K}}
\newcommand{\M}{\mathsf{M}}
\newcommand{\HHH}{\mathbb{H}}
\newcommand{\PPP}{\mathbb{P}}
\newcommand{\QQQ}{\mathbb{Q}}
\newcommand{\ext}{\mathsf{ext}}
\newcommand{\extpp}{\mathsf{ext}_{\textsc{pp}}}
\newcommand{\imppp}{\mathsf{imp}_{pp}}
\let\LL\L %I redefined the original \L command as \LL (to be used for the initial letter in Los). So we can use \L for languages
\renewcommand{\L}{\mathscr{L}}
\newcommand{\F}{\mathcal{F}}
\renewenvironment{abstract}
  {%
    \small
    \begin{center}%
      {\bfseries \abstractname\par}%
    \end{center}%
  }
\begin{document}

\title{An addendum to ``The theory of implicit operations''}

\author{Luca Carai, Miriam Kurtzhals, and Tommaso Moraschini}

\address{Luca Carai: Dipartimento di Matematica ``Federigo Enriques'', Universit\`a degli Studi di Milano, via Cesare Saldini 50, 20133 Milano, Italy}\email{luca.carai.uni@gmail.com}

\address{Miriam Kurtzhals and Tommaso Moraschini: Departament de Filosofia, Facultat de Filosofia, Universitat de Barcelona (UB), Carrer Montalegre, $6$, $08001$ Barcelona, Spain}
\email{mkurtzku7@alumnes.ub.edu and tommaso.moraschini@ub.edu}

\date{\today}

\maketitle

\begin{abstract}
In this addendum to \cite{CKMIMPv3}, we provide a pair of counterexamples relevant to the theory of implicit operations. More precisely, we exhibit a pp expansion of a variety that fails to be a variety (although it is a quasivariety).\ Furthermore, we construct a sequence of varieties possessing a congruence preserving Beth companion, but no simple Beth companion.
\end{abstract}

\section{Introduction}

An $n$-ary \emph{partial function} on a set $X$ is a function $f \colon Y \to X$ for some $Y \subseteq X^n$. In this case, the set $Y$ will be called the \emph{domain} of $f$ and will be denoted by $\mathsf{dom}(f)$. This notion can be extended to classes of algebras as follows. An $n$-ary \emph{partial function} on a class of algebras $\mathsf{K}$ is a sequence $\langle f^\A : \A \in \mathsf{K}\rangle$, where $f^\A$ is an $n$-ary partial function on $A$ for each $\A \in \mathsf{K}$. By a \emph{partial function} on $\mathsf{K}$ we mean an $n$-ary partial function on $\mathsf{K}$ for some $n \in \mathbb{N}$. When $f$ is a partial function on $\mathsf{K}$ and $\A \in \mathsf{K}$, we denote the $\A$-component of $f$ by $f^\A$. Lastly, throughout this note by a \emph{formula} we mean a first order formula.

\begin{Definition}
A formula $\varphi(x_1, \dots, x_n, y)$ with $n \geq 1$ in the language of a class of algebras $\mathsf{K}$ is said to be \emph{functional} in $\mathsf{K}$ when for all $\A \in \mathsf{K}$ and $a_1, \dots, a_n \in A$ there exists at most one $b \in A$ such that $\A \vDash \varphi(a_1, \dots, a_n, b)$.
\end{Definition}

In other words, $\varphi$ is functional in $\mathsf{K}$ when
\[
\mathsf{K} \vDash (\varphi(x_1, \dots, x_n, y) \sqcap \varphi(x_1, \dots, x_n, z)) \to y \thickapprox z.
\]
In this case, $\varphi$ induces an $n$-ary partial function $\varphi^\A$ on each $\A \in \mathsf{K}$ with domain 
\[
\mathsf{dom}(\varphi^\A) = \{ \langle a_1, \dots, a_n \rangle \in A^n : \text{there exists $b \in A$ such that }\A \vDash \varphi(a_1, \dots, a_n, b) \},
\]
defined for every $\langle a_1, \dots, a_n \rangle \in \mathsf{dom}(\varphi^\A)$ as
$\varphi^\A(a_1, \dots, a_n) = b$, where $b$ is the unique element of $A$ such that $\A \vDash \varphi(a_1, \dots, a_n, b)$. 
Consequently,
\[
\varphi^{\mathsf{K}} = \langle \varphi^\A : \A \in \mathsf{K}\rangle
\]
is an $n$-ary partial function on $\mathsf{K}$. 

\begin{Definition}
An $n$-ary partial function $f$ on a class of algebras $\mathsf{K}$ is said to be
\benroman
\item \emph{defined by a formula $\varphi$} when $\varphi$ is functional in $\mathsf{K}$ and $f = \varphi^{\mathsf{K}}$;
\item \emph{implicit} when it is defined by some formula;
\item an \emph{operation} of $\mathsf{K}$ when for each homomorphism $h \colon \A \to \B$ with $\A,\B \in \mathsf{K}$ and $\langle a_1, \dots, a_n \rangle \in \mathsf{dom}(f^\A)$ we have $\langle h(a_1), \dots, h(a_n) \rangle \in \mathsf{dom}(f^\B)$ and
\[
h(f^\A(a_1,\dots,a_n))=f^\B(h(a_1),\dots,h(a_n));
\]
\item an \emph{implicit operation} of $\mathsf{K}$ when it is both implicit and an operation of $\mathsf{K}$. 
\eroman   
We denote the class of implicit operations of $\mathsf{K}$ by $\mathsf{imp}(\mathsf{K})$.
\end{Definition}

In elementary classes, implicit operations admit the following description (see \cite[Thm.~3.9]{CKMIMPv3}).

\begin{Theorem}
Let $f$ be a partial function on an elementary class $\mathsf{K}$. Then $f$ is an implicit operation of $\mathsf{K}$ if and only if it is defined by an existential positive formula. 
\end{Theorem}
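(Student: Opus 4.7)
The plan is to reduce both implications to the classical \emph{homomorphism preservation theorem}, which states that, relative to any first-order theory $T$, a formula is preserved under homomorphisms between models of $T$ if and only if it is $T$-equivalent to an existential positive formula.

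For the easier direction, I would argue that if $f$ is defined by an existential positive formula $\varphi(x_1, \dots, x_n, y)$ that is functional in $\mathsf{K}$, then $f$ is implicit by definition. To see that $f$ is an operation, take a homomorphism $h \colon \A \to \B$ with $\A, \B \in \mathsf{K}$ and $\langle a_1, \dots, a_n\rangle \in \dom(f^\A)$, and set $b := f^\A(a_1, \dots, a_n)$. Since $\A \vDash \varphi(a_1, \dots, a_n, b)$ and existential positive formulas are preserved by homomorphisms, one has $\B \vDash \varphi(h(a_1), \dots, h(a_n), h(b))$, which yields both $\langle h(a_1), \dots, h(a_n)\rangle \in \dom(f^\B)$ and $f^\B(h(a_1), \dots, h(a_n)) = h(b) = h(f^\A(a_1, \dots, a_n))$, as required.

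For the converse, suppose $f$ is an implicit operation, defined by some formula $\varphi(\bar x, y)$. The first step is to unpack the operation condition to show that $\varphi$ itself is preserved under homomorphisms between members of $\mathsf{K}$: if $h \colon \A \to \B$ is such a homomorphism and $\A \vDash \varphi(\bar a, b)$, then $\bar a \in \dom(f^\A)$ and $f^\A(\bar a) = b$, so the operation hypothesis gives $h(\bar a) \in \dom(f^\B)$ and $f^\B(h(\bar a)) = h(b)$, which is precisely $\B \vDash \varphi(h(\bar a), h(b))$. Since $\mathsf{K}$ is elementary, $\mathsf{K}$ equals the class of models of $T := \operatorname{Th}(\mathsf{K})$, and the homomorphism preservation theorem then delivers an existential positive formula $\psi(\bar x, y)$ that is $T$-equivalent to $\varphi$; this $\psi$ is still functional in $\mathsf{K}$ and satisfies $\psi^{\mathsf{K}} = \varphi^{\mathsf{K}} = f$, so $f$ is defined by an existential positive formula.

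The main obstacle is really the appeal to the homomorphism preservation theorem for formulas with free variables modulo a first-order theory, which in this paper will presumably be cited as a known result. Its proof is a standard compactness/diagram argument: one forms the set $\Gamma(\bar x, y)$ of all existential positive formulas that $\varphi$ entails modulo $T$, introduces fresh constants $\bar c, d$, and uses the preservation hypothesis together with the positive diagram of a model of $T \cup \Gamma(\bar c, d)$ to produce a homomorphism into some model of $T \cup \{\varphi(\bar c, d)\}$, thereby forcing $T \cup \Gamma(\bar c, d) \vDash \varphi(\bar c, d)$; compactness then collapses $\Gamma$ to a finite existential positive conjunction equivalent to $\varphi$.
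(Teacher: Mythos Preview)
Your argument is correct. The easy direction is exactly as you say, and for the hard direction your observation that the defining formula $\varphi$ is preserved under homomorphisms between members of $\mathsf{K}$, combined with the classical homomorphism preservation theorem modulo a theory, yields an existential positive formula $T$-equivalent to $\varphi$ and hence defining the same implicit operation.

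Note, however, that the present paper does not actually contain a proof of this theorem: it is quoted from the companion paper \cite[Thm.~3.9]{CKMIMP} and stated here without argument. So there is no ``paper's own proof'' to compare against in this addendum. That said, the route via the homomorphism preservation theorem is the standard one, and it is what one expects the cited proof to do as well. Your closing sketch of the preservation theorem is a little loose in places (in particular, the direction of the diagram argument---one typically starts from a model of $T\cup\{\varphi(\bar c,d)\}$ and maps it homomorphically into an arbitrary model realizing the putative ep-consequences, or dually works with the ep-formulas implying $\varphi$), but since this is a well-known model-theoretic fact that the companion paper would simply cite, the sketch is adequate for the purpose.
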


\begin{exa}[\textsf{Monoids}]
A typical example of an implicit operation of the variety $\K$ of monoids arises from the idea of ``taking inverses''. More precisely, for every $\A \in \K$ let $f^\A$ be the unary partial function on $A$ with
\[
\mathsf{dom}(f^\A) = \{ a \in A : a \text{ is invertible}\}
\]
defined for every $a \in \mathsf{dom}(f^\A)$ as
\[
f^\A(a) = \text{the inverse of }a.
\]
Then $\langle f^\A : \A \in \K \rangle$ is an implicit operation of $\K$.
\qed
\end{exa}

Although concrete examples of implicit operations have long been known, the theory of implicit operations received its first systematic treatment in  \cite{CKMIMPv3}.
In this note, we exhibit two counterexamples relevant to the general theory of implicit operations. For this, we assume familiarity with the concepts and notation of \cite{CKMIMPv3}, as well as with the basics of the theory of Heyting algebras (see, e.g., \cite[Ch.~IX]{BD74}).

\section{A variety with a pp expansion that is a proper quasivariety}

Consider the linearly ordered Heyting algebra $\boldsymbol{C}_8$ with universe
\[
0 < a_1 < a_2 < \cdots < a_6 < 1.
\]
We consider the algebra $\boldsymbol{A}$ obtained by endowing $\boldsymbol{C}_8$ with a constant for the element $a_5$ as well as with a pair of binary operations $x + y$ and $x \ast y$ and a pair of unary operations $\Box x$ 
and $\Diamond x$ defined for every $a, b \in A$ as follows:
\begin{align*}
    a+b&= \begin{cases}
  a_6  & \text{if }a = 0 \text{ and }b \in \{ a_6, 1 \}; \\
  a_5  & \text{if }a = 0 \text{ and }b = a_3; \\
 a_2 & \text{if }(a=0  \text{ and }b \notin \{ a_3, a_6, 1 \})\text{ or }(a \ne 0 \text{ and }b \ne a_1);\\
 a_1 & \text{if } a \ne 0   \text{ and }b =a_1;
\end{cases}\\
a \ast b&= \begin{cases}
1  &\text{if } a = a_4  \text{ and } b = a_6; \\
0  & \text{otherwise};
\end{cases}
\\
\Box a &= \begin{cases}
1  & \text{if }a = a_5; \\
0  & \text{otherwise};
\end{cases}
\\
\Diamond a &= \begin{cases}
1  & \text{if }a \in \{ 0, a_6, 1 \}; \\
 a_1  & \text{if }a  \in \{ a_1, a_2 \}; \\
 a_3& \text{if }a \in \{ a_3, a_5 \};\\
 a_5 & \text{if }a = a_4.
\end{cases}
\end{align*}

Our aim is to prove the following. 

\begin{Theorem}\label{Thm: pp exp variety not variety}
The variety $\mathbb{V}(\A)$ has a pp expansion
 that is a proper quasivariety and is not congruence preserving.
\end{Theorem}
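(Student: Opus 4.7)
The plan is to exhibit a specific pp formula $\varphi$ in the signature of $\boldsymbol{A}$ defining a partial function that is functional on $\mathbb{V}(\boldsymbol{A})$, hence---by the preceding Theorem---an implicit operation, and then to show that expanding $\mathbb{V}(\boldsymbol{A})$ by it produces a proper quasivariety that is not congruence preserving. The operations of $\boldsymbol{A}$ are tailored so that pp-atoms isolate specific elements and pairs: $\Box z \thickapprox 1$ characterizes $z = a_5$, $u \ast v \thickapprox 1$ characterizes the pair $(u, v) = (a_4, a_6)$, and the constant $a_5$ is available outright. Composing these with judicious uses of $+$ and $\Diamond$, one can build a pp formula $\varphi(x, y)$ whose realization on $\boldsymbol{A}$ is a partial function with a small, carefully chosen domain (for instance an assignment of the form $b \mapsto a_5$ for some specific $b$).

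Next, I would verify that $\varphi$ is functional throughout $\mathbb{V}(\boldsymbol{A})$, not merely on $\boldsymbol{A}$. Because $\boldsymbol{A}$ is a finite algebra generating a congruence-distributive variety (distributivity being inherited from the Heyting reduct $\boldsymbol{C}_8$), J\'onsson's Theorem restricts the subdirectly irreducibles of $\mathbb{V}(\boldsymbol{A})$ to members of $\mathbb{HS}(\boldsymbol{A})$, reducing the functionality check to a finite case analysis on proper quotients and subalgebras of $\boldsymbol{A}$. With functionality in hand, $\varphi$ induces an implicit operation $f$ on $\mathbb{V}(\boldsymbol{A})$, and the pp expansion of $\mathbb{V}(\boldsymbol{A})$ along $\varphi$ is automatically a quasivariety: it is axiomatized by the original equations, the functionality quasi-equation for $\varphi$, and the equations linking the new symbol for $f$ to $\varphi$.

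To conclude that the pp expansion is a \emph{proper} quasivariety and fails congruence preservation, I would locate a congruence $\theta$ of the reduct $\boldsymbol{A}$ that is not a congruence of the expansion $\boldsymbol{A}^+$. The congruence lattice of $\boldsymbol{A}$ is small and amenable to direct inspection: congruences of $\boldsymbol{C}_8$ correspond to principal filters, and the additional operations further restrict which of these survive as congruences of $\boldsymbol{A}$. The target $\theta$ should either identify two elements whose $f$-values must remain distinct or collapse a domain element of $f$ with a non-domain element, so that in $\boldsymbol{A}/\theta$ the formula $\varphi$ ceases to be functional. Then $\boldsymbol{A}/\theta$ lies in $\mathbb{V}(\boldsymbol{A})$ but admits no consistent expansion by $f$, showing that the pp expansion is not closed under homomorphic images (hence not a variety) and, equivalently, that it fails to be congruence preserving. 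The main obstacle will be choosing $\varphi$ and $\theta$ in tandem: functionality of $\varphi$ globally in $\mathbb{V}(\boldsymbol{A})$ does not follow automatically from its functionality in $\boldsymbol{A}$, while $\theta$ must respect the original operations yet destroy the implicit definability provided by $\varphi$.
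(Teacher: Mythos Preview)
Your outline follows the right broad strategy (J\'onsson's Theorem to pin down the subdirectly irreducibles, then a specific pp formula), but the concrete mechanism you sketch cannot work as stated, for two related reasons.

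First, you plan to ``locate a congruence $\theta$ of the reduct $\boldsymbol{A}$'' and argue via $\boldsymbol{A}/\theta$. But $\boldsymbol{A}$ is \emph{simple}: from any pair $\langle a_6,1\rangle$ one obtains $\langle a_4\ast a_6,\;a_4\ast 1\rangle=\langle 1,0\rangle$, and since the lattice reduct is a chain this forces the total congruence. (The operation $\ast$ is in the signature precisely to kill nontrivial congruences of $\boldsymbol{A}$.) So there is no nontrivial $\theta$ on $\boldsymbol{A}$ to work with. The action has to take place on the proper subalgebra $\boldsymbol{A}-\{a_4\}$, which does admit the congruence $\theta$ identifying $a_6$ with $1$; the quotient $\boldsymbol{B}=(\boldsymbol{A}-\{a_4\})/\theta$ is the third subdirectly irreducible of $\mathbb{V}(\boldsymbol{A})$.

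Second, even at the right locus, the failure is not that ``$\theta$ is a congruence of the reduct but not of the expansion''. In the paper's argument one takes $\varphi(x,y)=\exists z\,(x+y\thickapprox\Diamond z)$, which defines a \emph{total} function on $\boldsymbol{A}$ (namely $f^{\boldsymbol{A}}(0)=a_3$ and $f^{\boldsymbol{A}}(a)=a_1$ for $a\neq0$), hence on the subalgebra $\boldsymbol{C}$ obtained by restricting the expansion $\boldsymbol{A}[\mathscr{L}_f]$ to $A-\{a_4\}$. Crucially, $\theta$ \emph{is} compatible with the new operation on $\boldsymbol{C}$, so $\boldsymbol{C}/\theta$ is a bona fide quotient in the expanded signature. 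The contradiction is that $\boldsymbol{C}/\theta\notin\mathsf{M}$: on the one hand $g_f^{\boldsymbol{C}/\theta}(0/\theta)=a_3/\theta$ by inheritance, while on the other hand the identification $a_6/\theta=1/\theta$ creates a \emph{new} witness in the quotient making $\varphi(0/\theta,\,1/\theta)$ hold, so in any member of $\mathsf{M}$ extending $\boldsymbol{C}/\theta$ the value at $0/\theta$ would have to be $1/\theta\neq a_3/\theta$. Thus the obstruction is not ``$\theta$ breaks the new operation'' but ``the quotient acquires extra pp-witnesses that clash with the inherited value'', and this is also what witnesses $\theta\in\mathsf{Con}(\boldsymbol{C}{\upharpoonright}_{\mathscr{L}_{\mathbb{V}(\boldsymbol{A})}})\smallsetminus\mathsf{Con}_{\mathsf{M}}(\boldsymbol{C})$. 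Your candidate formulas built from $\Box$, $\ast$, and the constant $a_5$ could perhaps be made to work, but you will need to abandon the search for a congruence on $\boldsymbol{A}$ itself and reframe the failure as a quotient leaving $\mathsf{M}$ rather than a congruence failing to respect the new symbol.
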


\begin{proof}
By \cite[Thm.~12.6]{CKMIMPv3} every congruence preserving pp expansion of a variety is a variety. So, it is sufficient to show that $\mathbb{V}(\A)$ has a pp expansion that is a proper quasivariety.
    The proof proceeds through a series of claims. First, 
observe that $A- \{ a_4 \}$ is the universe of a subalgebra $\A- \{ a_4 \}$ of $\A$.

\begin{Claim} \label{Claim : subalgebras of A} We have 
    $\oper{S}(\A) = \{\boldsymbol{A},  \boldsymbol{A} - \{ a_4 \}\}$.
\end{Claim}

\begin{proof}[Proof of the Claim]
As $\A- \{ a_4 \}$ is a subalgebra of $\A$, it suffices to prove the inclusion $\oper{S}(\A) \subseteq \{\boldsymbol{A},  \boldsymbol{A} - \{ a_4 \}\}$, which amounts to $\mathsf{Sg}^{\A}(\emptyset) = A \smallsetminus \{ a_4\}$.
First, observe that $\mathsf{Sg}^{\A}(\emptyset)$ contains the interpretations $0, a_5$, and $1$ of the constants. As 
\[
0+1 = a_6, \quad 1+0 = a_2, \quad \Diamond a_2 = a_1, \quad \text{ and }\quad \Diamond a_5 = a_3,
\]
we conclude that $\mathsf{Sg}^{\A}(\emptyset)$ contains $a_1, a_2, a_3$, and $a_6$ as well. Hence, $\mathsf{Sg}^{\A}(\emptyset) = A \smallsetminus \{ a_4\}$.
\end{proof}

\begin{Claim}\label{Claim : new claim on B chain SI}
   Let $\C \in \mathbb{V}(\A)$ be a finite nontrivial chain with second largest element $a$. Then $\C$ is subdirectly irreducible with monolith $\mathsf{Cg}^\C(a, 1)$.    
    \end{Claim}

\begin{proof}[Proof of the Claim]
It suffices to show that $\mathsf{Cg}^{\C}(1,a)$ is the monolith 
 of $\C$.  First, observe that $\mathsf{Cg}^{\C}(1,a) \in \mathsf{Con}(\C) - \{ \textup{id}_C\}$ because $a < 1$, where $1$ is the maximum of $\C$. Then consider $\theta \in \mathsf{Con}(\C) - \{ \textup{id}_C\}$. As $\theta \ne \textup{id}_C$, there exist distinct $b,c \in C$ such that $\langle b,c \rangle \in \theta$. Since $b \ne c$, we have $b \leftrightarrow b = 1$ and $b \leftrightarrow c \ne 1$, where $x \leftrightarrow y$ is a shorthand for $(x \to y) \land (y \to x)$.      As $a$ is the second largest element of $\C$, this implies $     (b \leftrightarrow b) \lor a = 1$ and $(b \leftrightarrow c) \lor a = a$.     Together with $\langle b, c \rangle \in \theta$, this yields $\langle 1, a \rangle \in \theta$, whence $\mathsf{Cg}^{\C}(1,a) \subseteq \theta$. 
\end{proof}

Observe that
\[
\theta =  \textup{id}_{A - \{ a_4 \}} \cup \{ \langle a_6, 1 \rangle, \langle 1, a_6 \rangle\}
\]
is a congruence of $\A - \{ a_4 \}$. Then let 
\[
\B = (\boldsymbol{A} - \{ a_4 \})/\theta.
\]

\begin{Claim}\label{Claim : the SI members of V(A) in pp counterexample}
We have $\mathbb{V}(\boldsymbol{A})_{\textsc{si}} = \oper{I}(\{\boldsymbol{A}, \boldsymbol{A} - \{ a_4 \}, \B\})$.
\end{Claim}

\begin{proof}[Proof of the Claim]
 Observe that all $\boldsymbol{A}, \boldsymbol{A} - \{ a_4 \}$, and $\B$ are finite nontrivial chains. Therefore, the inclusion from right to left follows from Claim \ref{Claim : new claim on B chain SI}. 

To prove the inclusion from left to right, observe that the variety $\mathbb{V}(\boldsymbol{A})$ is congruence distributive because it has a lattice reduct  (see, e.g., \cite[Thm.~7.2]{CKMIMPv3}). 
By J\'onsson's Theorem 
(see, e.g., \cite[Thm.~6.8]{BuSa00}) and \cite[Thm. 5.6(2)]{Ber11}
we have $\mathbb{V}(\boldsymbol{A})_{\textsc{si}}\subseteq \mathbb{H}\mathbb{S}(\boldsymbol{A})$.
Together with Claim \ref{Claim : subalgebras of A}, this yields  
\[
\mathbb{V}(\boldsymbol{A})_{\textsc{si}}\subseteq \mathbb{H}(\{ \boldsymbol{A}, \boldsymbol{A} - \{ a_4 \}\}).
\]
Therefore, to conclude the proof, it will be enough to show that $\A$ is simple and that, up to isomorphism, the only nontrivial homomorphic images of $\boldsymbol{A} - \{ a_4 \}$ are $\boldsymbol{A} - \{ a_4 \}$ and $\B$.

We begin by proving that $\A$ is simple,
which means that $\mathsf{Con}(\A)$ has exactly two elements. In view of Claim \ref{Claim : new claim on B chain SI}, it suffices to show that $\mathsf{Cg}^{\A}(a_6 ,1)
= A \times A$. 
Observe that $\langle 1,0 \rangle = \langle a_4 \ast a_6, a_4 \ast 1\rangle \in \mathsf{Cg}^{\A}(a_6,1)$. As the lattice reduct of $\A$ is a chain with extrema $0$ and $1$, this guarantees that $\mathsf{Cg}^{\A}(a_6,1) = A \times A$. 

Lastly, we prove that, up to isomorphism, the only nontrivial homomorphic images of $\boldsymbol{A} - \{ a_4 \}$ are $\boldsymbol{A} - \{ a_4 \}$ and $\B$. By the definition of $\B$ it will be enough to show that for every $\phi \in \mathsf{Con}(\boldsymbol{A} - \{ a_4 \})$,
\[
\phi \notin \{ \textup{id}_{A - \{ a_4 \}}, \theta \} \text{ implies }\phi = (A - \{ a_4 \}) \times (A - \{ a_4 \}).
\]
Consider $\phi \notin \{ \textup{id}_{A - \{ a_4 \}}, \theta \}$. Observe that the definition  of $\theta$ and Claim \ref{Claim : new claim on B chain SI} guarantee that $\theta \subsetneq \phi$. 
Therefore, there exist $c,d \in A - \{ a_4 \}$ such that $\langle c,d \rangle \in \phi - \theta$. From the definition of $\theta$ it follows that 
\[
c \ne d \, \, \text{ and }\, \, \{ c, d \} \ne \{ a_6, 1 \}.
\]
As $c \ne d$ and the lattice reduct of $\A - \{ a_4 \}$ is a chain, we can assume that $c < d$. 
From $c < d$, the right hand side of the above display, and the fact that $a_6$ is the second largest element of $A - \{ a_4 \}$ it follows that  $c < a_6$,   whence $c \leq a_5$. Consequently,
\[
\langle 1, a_5  \rangle = \langle a_5 \lor 1, a_5 \lor c \rangle = \langle a_5 \lor (c \to c), a_5 \lor (d \to c) \rangle \in \phi
\]
and, therefore, $\langle 1, 0 \rangle = \langle \Box a_5, \Box 1 \rangle \in \phi$. As before, this yields $\phi = (A - \{ a_4 \}) \times (A - \{ a_4 \})$. 
\end{proof}

Consider the pp formula 
\[
\varphi(x, y) = \exists z (x+y \thickapprox \Diamond z).
\]

\begin{Claim} \label{Claim : varphi in ext}
The formula    $\varphi(x, y)$ defines an extendable implicit operation $f$ of $\mathbb{V}(\boldsymbol{A})$ such that $f^\A$ is a total function defined for every $a \in A$ as
\[
f^{\boldsymbol{A}}(a)= \begin{cases}
a_1  & \text{if } a \ne 0; \\
a_3  & \text{if }a = 0.
\end{cases}\]
\end{Claim}

\begin{proof}[Proof of the Claim]
 We will show that $\varphi$ defines an extendable implicit operation $f$ of $\VVV(\A)$. The description of $f^\A$ in the statement will be an immediate consequence of our proof.  
 
In view of \cite[Cor.~8.14]{CKMIMPv3},
 it suffices to show that every member of $\VVV(\A)_\textsc{si}$ can be extended to one of $\VVV(\A)$ in which $\varphi(x, y)$ defines a total unary function. Recall from \cref{Claim : the SI members of V(A) in pp counterexample} that $\mathbb{V}(\boldsymbol{A})_{\textsc{si}} = \III(\{\boldsymbol{A}, {\boldsymbol{A} - \{ a_4 \}}, \B\})$. As $\boldsymbol{A} - \{ a_4 \} \leq \A$, we have $\mathbb{V}(\boldsymbol{A})_{\textsc{si}} \subseteq \III\SSS(\{ \A, \B\})$. Consequently, it will be enough to show that $\varphi(x, y)$ defines a total unary function both on $\A$ and $\B$.
    
We begin with the case of $\A$.\ We need to prove that for every $a \in A$ there exists a unique $b \in A$ such that $\A \vDash \varphi(a, b)$. To this end, consider $a \in A$. We have two cases: either $a = 0$ or $a \ne 0$. First, suppose that $a = 0$. Since
\[
a + a_3 = 0 + a_3 = a_5 = \Diamond a_4,
\]
the definition of $\varphi$ guarantees that $\A \vDash \varphi(a, a_3)$. Therefore, it only remains to show that $b = a_3$ for every $b \in A$ such that $\A \vDash \varphi(a, b)$. Consider $b \in A$ such that $\A \vDash \varphi(a, b)$. Then $a + b = \Diamond c$ for some $c \in A$. As $a = 0$, we have $a + b \in \{a_2, a_5, a_6\}$. Together with $\Diamond[A] = \{ a_1, a_3, a_5, 1\}$ and $a + b = \Diamond c$, this implies $a + b = a_5$ From the definition  of $+$ it thus follows that $b = a_3$, as desired.

Then we consider the case where $a \ne 0$. Since $a + a_1 = a_1 = \Diamond a_1$, 
the definition of $\varphi$ guarantees that $\A \vDash \varphi(a, a_1)$. 
Therefore, it only remains to show that $b = a_1$ for every $b \in A$ such that $\A \vDash \varphi(a, b)$. Consider $b \in A$ such that $\A \vDash \varphi(a, b)$. Then $a + b = \Diamond c$ for some $c \in A$. As $a \ne 0$, we have $a + b \in \{a_1, a_2 \}$. Together with $\Diamond[A] = \{ a_1, a_3, a_5, 1\}$ and $a + b = \Diamond c$, this implies $a + b = a_1$ From the definition of $+$ it thus follows that  $b = a_1$.

Next we consider the case of $\B = (\A - \{ a_4 \}) / \theta$. Since $\A - \{ a_4 \} \leq \A$ the definition of $\theta$ guarantees that for every $a, b \in A - \{ a_4 \}$,
\begin{align}
    \label{Eq : Miriam new equation}
     \begin{split}
    \B \vDash \varphi(a/\theta, b/\theta) \iff & \text{ there exists }c \in A - \{a_4\} \text{ such that }\\
    &\text{ either } a+^\A b = \Diamond^\A c \text{ or } \{ a+^\A b, \Diamond^\A c \} = \{ a_6, 1 \}. 
    \end{split}
\end{align}
Then let $a  \in  A - \{ a_4 \}$. As before, we have two cases: either $a = 0$ or $a \ne 0$. First, suppose that $a = 0$. Since
\[
a +^\A a_6 = 0 +^\A a_6 = a_6 \, \, \text{ and }\, \, \Diamond^\A a_6 = 1, 
\]
from $\langle a_6, 1 \rangle \in \theta$ it follows that 
\[
a / \theta +^\B a_6 / \theta = a_6 / \theta = 1 / \theta = \Diamond^{\B} a_6 / \theta.
\]
By the definition of $\varphi$ this guarantees that $\B \vDash \varphi(a / \theta, a_6 / \theta)$. Therefore, it only remains to show that $b / \theta = a_6 / \theta$ for every $b \in A - \{ a_4 \}$ such that $\B \vDash \varphi(a / \theta, b / \theta)$. Consider $b \in A - \{ a_4\}$ such that $\B \vDash \varphi(a / \theta, b / \theta)$. Let $c \in A - \{ a_4 \}$ be the element given by the right hand side of (\ref{Eq : Miriam new equation}). As $a =  0$, we have $a +^\A b \in \{ a_2, a_5, a_6 \}$. Together with $\Diamond c \in \Diamond[A - \{ a_4 \}] = \{ a_1, a_3, 1\}$, the right hand side of (\ref{Eq : Miriam new equation}) ensures that $a+^\A b = a_6$. By the definition of $+$ we obtain  $b \in \{ a_6, 1 \}$. As $\langle a_6, 1 \rangle \in \theta$, we conclude that $b / \theta = a_6 / \theta$, as desired. Then we consider the case where $a \ne 0$. In this case, $a +^\A a_1 = a_1 = \Diamond^\A a_1$. 
Therefore, $\B \vDash \varphi(a / \theta, a_1 / \theta)$ by the definition of $\varphi$. It only remains to show that $b / \theta = a_1 / \theta$ for every $b \in A - \{ a_4 \}$ such that $\B \vDash \varphi(a / \theta, b / \theta)$. Consider $b \in A - \{ a_4\}$ such that $\B \vDash \varphi(a / \theta, b / \theta)$. As before, let $c \in A - \{ a_4\}$ be the element given by right hand side of (\ref{Eq : Miriam new equation}). Since $a \ne  0$, we have $a +^\A b \in \{ a_1, a_2 \}$. Together with $\Diamond c \in \Diamond[A - \{ a_4 \}] = \{ a_1, a_3, 1\}$ and the right hand side of (\ref{Eq : Miriam new equation}), 
it follows that $a+^\A b = a_1$. By the definition of $+$ we obtain  $b = a_1$, whence $b / \theta = a_1 / \theta$.
\end{proof}

By \cref{Claim : varphi in ext} the formula $\varphi$ defines some $f \in \mathsf{ext}_{pp}(\mathbb{V}(\A))$. 
 Consider the $f$-expansion $\mathscr{L}_f$ of $\mathscr{L}_{\mathbb{V}(\A)}$
obtained by adding a new unary function symbol $g_f$ to $\mathscr{L}_{\mathbb{V}(\A)}$. Moreover, let $\mathsf{M}$ be the pp expansion $\SSS(\VVV(\A)[\mathscr{L}_\mathcal{F}])$ of $\mathbb{V}(\A)$ induced by 
$f$ and $\mathscr{L}_f$. 
To conclude the proof, it only remains to show that $\mathsf{M}$ is a proper quasivariety. 

First, $\mathsf{M}$ is a quasivariety by \cite[Thm.~10.3(ii)]{CKMIMPv3}.
We will prove that $\mathsf{M}$ is not a variety, i.e., it is not closed under $\HHH$. 
Recall from \cref{Claim : varphi in ext} that $f^\A$ is a total function. Therefore, the algebra $\A[\mathscr{L}_\mathcal{F}]$ is well defined. Moreover, the definition of $\A$ and the description of $f^\A$ in Claim \ref{Claim : varphi in ext} guarantee that $A - \{ a_4 \}$ is the universe of a subalgebra $\C$ of $\A[\mathscr{L}_\mathcal{F}]$.  Then 
from the definition of $\mathsf{M}$ it follows that
\[
\C \in \SSS(\A[\mathscr{L}_\mathcal{F}]) \subseteq \SSS(\mathbb{V}(\A)[\mathscr{L}_\mathcal{F}]) = \mathsf{M}.
\]
Now recall that
\[
\theta = \textup{id}_{A - \{ a_4 \}} \cup \{ \langle a_6, 1 \rangle, \langle 1, a_6 \rangle \}.
\]
As $\theta$ is a congruence of $\A - \{ a_4 \} = \C {\upharpoonright}_{\mathscr{L}_{\mathbb{V}(\A)}}$ which, moreover, is compatible with the new operation $g_f^\C = f^\A{\upharpoonright}_C$ by \cref{Claim : varphi in ext}, we obtain that $\theta$ is also a congruence of $\C$. We will prove that $\C / \theta \notin \mathsf{M}$. As $\C \in \mathsf{M}$, this will imply that $\mathsf{M}$ is not closed under $\HHH$, as desired. 

Suppose, with a view to contradiction, that $\C / \theta \in \mathsf{M}$. By the definition of $\mathsf{M}$ there exists $\D \in \mathbb{V}(\A)$ such that $f^\D$ is total and $\C / \theta \leq \D[\mathscr{L}_\mathcal{F}]$. Observe that 
\[ 
0 +^\A 1 = a_6 \, \, \text{ and } \, \, \Diamond^\A 1 = 1.
\]
Since $\langle a_6, 1 \rangle \in \theta$ and $\C{\upharpoonright}_{\mathscr{L}_{\mathbb{V}(\A)}} = \A - \{ a_4 \} \leq \A$, this yields
\[
0 +^{\C / \theta} 1 / \theta = a_6 / \theta = 1 / \theta = (\Diamond^\A 1) / \theta = \Diamond^{\C / \theta} 1 / \theta.
\]
Together with the definition of $\varphi$, this guarantees $\C / \theta \vDash \varphi(0 / \theta, 1 / \theta)$. Since $\varphi$ is a pp formula and $\C / \theta\leq \D[\mathscr{L}_\mathcal{F}]$, from 
\cite[Prop.~8.1]{CKMIMPv3} it follows that $\D[\mathscr{L}_\mathcal{F}] \vDash \varphi(0 / \theta, 1 / \theta)$. As $\varphi$ is a formula in $\mathscr{L}_{\mathbb{V}(\A)}$ and $\D = \D[\mathscr{L}_\mathcal{F}]{\upharpoonright}_{\mathscr{L}_{\mathbb{V}(\A)}}$, we obtain $\D \vDash \varphi(0 / \theta, 1 / \theta)$. Since $\varphi$ is the formula defining $f$ and $g_f^{\D[\mathscr{L}_\mathcal{F}]} = f^\D$, this yields
\[
g_f^{\D[\mathscr{L}_\mathcal{F}]}(0 / \theta) = f^\D(0 / \theta) = 1 / \theta.
\]
Therefore, $g_f^{\C / \theta}(0 / \theta) = 1 / \theta$ because $\C / \theta \leq \D[\mathscr{L}_\mathcal{F}]$.
On the other hand, we will prove that
\[
g_f^{\C / \theta} (0 / \theta) = g_f^{\C}(0) / \theta = g_f^{\A[\mathscr{L}_\mathcal{F}]}(0) / \theta  = f^\A(0) / \theta = a_3 / \theta \ne 1 / \theta,
\]
thus obtaining the desired contradiction. The first equality above holds by the definition of a quotient algebra, the second because $\C \leq \A[\mathscr{L}_\mathcal{F}]$, the third by the definition of $\A[\mathscr{L}_\mathcal{F}]$, and the fourth by \cref{Claim : varphi in ext}. Finally, the inequality at the end of the above display follows from the definition of $\theta$.
\end{proof}

\begin{Remark}
    The proof of \cref{Thm: pp exp variety not variety} yields that $\theta \in \mathsf{Con}(\C {\upharpoonright}_{\mathscr{L}_{\mathbb{V}(\A)}})-
    \mathsf{Con}_{\M}(\C)$, witnessing that the pp expansion $\M$ of $\VVV(\A)$ is not congruence preserving.
    \qed
\end{Remark}

\section{A nonsimple congruence preserving Beth companion}

While a generic pp expansion of a class of algebras $\mathsf{K}$ is of the form $\SSS(\mathsf{K}[\mathscr{L}_{\mathcal{F}}])$ for some $\mathcal{F} \subseteq \extpp(\mathsf{K})$, all the examples discussed in \cite{CKMIMPv3} are of the form $\mathsf{K}[\mathscr{L}_{\mathcal{F}}]$, i.e., $\mathsf{K}[\mathscr{L}_{\mathcal{F}}]$ is already closed under subalgebras. 

\begin{Definition}
    A pp expansion $\M$ of a class of algebras $\K$ is said to be
    \benroman
    \item  \emph{simple} when it is of the form $\K[\L_\F]$ for some $\mathcal{F} \subseteq \extpp(\K)$;
    \item a \emph{simple Beth companion} of $\K$ when it is simple and a Beth companion of $\K$.
    \eroman
\end{Definition}

Simple Beth companions have particularly nice properties.
For example, they are congruence preserving (see \cite[Thm.\ 12.3]{CKMIMPv3}) and often inherit or improve certain structural properties of the original class (see \cite[Thm.\ 12.7]{CKMIMPv3}). It is therefore natural to wonder whether every quasivariety $\mathsf{K}$ with a Beth companion has also a simple  Beth companion.  
This is the case, for instance, when $\K$ has 
the amalgamation property or has a Beth companion induced by implicit operations defined by conjunctions of equations (see \cite[Thm.~12.3]{CKMIMPv3}). Our aim is to show that the previous conjecture fails, 
even when $\K$ is a variety with a
congruence preserving Beth companion. Actually, a counterexample can be found among some of the simplest varieties of Heyting algebras, as we proceed to illustrate.

For every cardinal $\kappa$ let $\A_\kappa$ be the unique Heyting algebra whose lattice reduct is obtained by adding a new maximum $1$ to the powerset lattice $\langle \mathcal{P}(\kappa); \cap, \cup \rangle$. The implication of  $\A_\kappa$ is defined by the rule
\[
a \to b = \begin{cases}
  1  & \text{if }a \leq b; \\
 b  & \text{if }a = 1;\\
 (\kappa -a) \cup b & \text{if $a, b \in \mathcal{P}(\kappa)$ and $a \nleq b$}.
\end{cases}
\]
As expected, $\A_\kappa$ and the powerset Boolean algebra $\mathcal{P}(\kappa)$ are closely related, in the sense that $\mathcal{P}(\kappa)$ is isomorphic to the algebra obtained by quotienting $\A_\kappa$ under the congruence that glues $1$ with $\kappa$ and leaves any other element untouched.

The varieties generated by Heyting algebras of the form $\A_\kappa$ form the chain
\[
\VVV(\A_0) \subsetneq \VVV(\A_1)\subsetneq \dots \subsetneq \VVV(\A_n)\subsetneq \dots \subsetneq  \VVV(\A_\omega),
\]
where $\VVV(\A_\omega) = \VVV(\A_\kappa)$ for every infinite cardinal $\kappa$ (see \cite{HO70}).\footnote{Although we shall not rely on this fact, we remark that these are precisely the nontrivial varieties of Heyting algebras of depth $\leq 2$ (see also \cite[Exa.~10.18]{CKMIMPv3}).}

 The remainder of this section is devoted to showing that for $n \geq 3$ the variety $\mathbb{V}(\A_n)$ 
 provides a counterexample to the conjecture that every 
 congruence preserving Beth companion of a quasivariety is  simple. More precisely, we will establish the next result.

\begin{Theorem}\label{Thm : proper Beth companion}
The following conditions hold for every $\kappa \in \mathbb{N} \cup \{ \omega \}$:
\benroman
\item $\VVV(\A_\kappa)$ has a   congruence preserving   Beth companion;
\item $\VVV(\A_\kappa)$ has a  simple Beth companion if and only if $\kappa \in \{ 0, 1, 2, \omega \}$.
\eroman
\end{Theorem}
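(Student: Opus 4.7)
The proof of \cref{Thm : proper Beth companion} splits along its two clauses. For (i) I would exhibit, uniformly in $\kappa$, a pp-defined implicit operation of $\VVV(\A_\kappa)$ and show that the induced pp expansion is a congruence preserving Beth companion. For (ii), the positive direction $\kappa \in \{0, 1, 2, \omega\}$ is handled by amalgamation, while the negative direction $3 \leq \kappa < \omega$ requires a specific obstruction and is the main technical content.

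For (i), the structural feature I would exploit is that in every $\A_\kappa$ the element $\kappa \in A_\kappa$ is the unique $y < 1$ with $\neg y \thickapprox 0$, and, more generally, every subdirectly irreducible member of $\VVV(\A_\kappa)$ is of the form $\A_\lambda$ for some $\lambda \leq \kappa$ by \cite{HO70}, so each such algebra carries its own distinguished pre-top. I would write down a pp formula $\varphi(x, y)$ whose existential witness encodes Booleanness below that pre-top relative to an input $x$, verify that $\varphi$ is functional on every $\A_\lambda$ by direct inspection, and then apply \cite[Cor.~8.14]{CKMIMP} to conclude that $\varphi$ defines an extendable implicit operation $f$ of $\VVV(\A_\kappa)$. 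Congruence preservation of the induced pp expansion $\SSS(\VVV(\A_\kappa)[\L_f])$ would follow from \cite[Thm.~12.4]{CKMIMP}, and the Beth companion property is a direct check using the explicit form of $\varphi$.

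For the positive direction of (ii), I would invoke Maksimova's classification of amalgamable varieties of Heyting algebras: each of $\VVV(\A_0) = \mathsf{BA}$, $\VVV(\A_1)$, $\VVV(\A_2)$, and $\VVV(\A_\omega)$ (the variety of Heyting algebras of depth $\leq 2$) appears on her list and hence has the amalgamation property. Consequently, \cite[Thm.~12.7]{CKMIMP} together with \cite[Rem.~11.12(vi)]{CKMIMP} immediately supplies an equational Beth companion in these four cases.

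For the negative direction of (ii) -- the principal obstacle -- I would prove by contradiction that for $3 \leq \kappa < \omega$ no equational Beth companion of $\VVV(\A_\kappa)$ exists. Assuming such a companion $\M$, each of its generating implicit operations would be defined by a conjunction of equations, hence would be term-like in the sense of extending uniquely and compatibly along every homomorphism between members of $\VVV(\A_\kappa)$. Leveraging the failure of amalgamation in $\VVV(\A_\kappa)$ for such $\kappa$ (not on Maksimova's list), I would construct an explicit V-formation $\A \leftarrow \B \to \C$ in $\VVV(\A_\kappa)$ which admits no amalgam, and show that the equational implicit operations of $\M$ would transport consistently across both legs of the V-formation to build an amalgam inside $\VVV(\A_\kappa)$, a contradiction. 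The hardest step is engineering the V-formation so that conjunction-of-equations implicit operations actually do force such an amalgam; this requires a careful combinatorial analysis of the homomorphisms between the algebras $\A_\lambda$ with $\lambda \leq \kappa$ and of the traces of putative equational implicit operations on them.
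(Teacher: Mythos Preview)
Your outline has genuine gaps in both (i) and the negative direction of (ii), and the two gaps are linked.

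\textbf{On (i).} A single pp formula $\varphi(x,y)$ will not yield a Beth companion of $\VVV(\A_n)$ for $n \geq 3$. Being a Beth companion means the expansion has the strong epimorphism surjectivity property, and this is the hard part --- your phrase ``the Beth companion property is a direct check'' hides exactly the work that must be done. The paper needs an entire family $\{f_{k,n} : 1 \leq k \leq n-1\}$ of atom-counting implicit operations: $f_{k,n}^{\A_n}(a)$ returns $1$ or the pre-top $e$ according to whether $\lvert \mathsf{at}_{\A_n}(a)\rvert \leq k$ or $> k$. These operations are precisely what is needed to make any two embeddings of a subalgebra into $\B_n = \A_n[\mathscr{L}_{\mathcal{F}_n}]$ differ only by an automorphism of $\B_n$ (Proposition~\ref{Prop : proper Beth completion : automorphisms}(ii)), which in turn gives amalgamation for $\mathsf{B}(n)_\textsc{fsi}^*$ and hence, via \cite[Cor.~7.16]{CKMIMP}, strong ES for $\mathsf{B}(n)$. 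A single ``Booleanness below the pre-top'' formula cannot distinguish atoms of different coatomic weight and so cannot rigidify embeddings in this way. (For $\kappa \in \{0,1,2,\omega\}$ no expansion is needed: these varieties already have strong ES by \cite[Thm.~8.1]{Mak00}, so they are their own Beth companions.)

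\textbf{On the negative direction of (ii).} Your proposed mechanism --- that equational implicit operations would ``transport consistently across both legs of a V-formation to build an amalgam'' --- does not work. Preservation of implicit operations along homomorphisms gives you no construction of amalgams; it only says that \emph{if} an amalgam exists, the operations agree on it. The paper's route is entirely different and depends on having already carried out (i) properly. Since all Beth companions of a quasivariety are faithfully term equivalent relative to it \cite[Thm.~11.7]{CKMIMP}, it suffices to show that the specific $\mathsf{B}(n)$ constructed in (i) is not equational. If it were, then by \cite[Thm.~10.4]{CKMIMP} it would be term equivalent to some $\VVV(\A_n)[\mathscr{L}_\mathcal{F}^*]$ with every $f \in \mathcal{F}$ total on all of $\VVV(\A_n)$. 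Now take $\C = \mathsf{Sg}^{\B_n}(a)$ for an atom $a$; its Heyting reduct is isomorphic to $\A_2$, which has an automorphism swapping its two atoms $a$ and $\lnot a$. Every implicit operation on $\VVV(\A_n)$ must respect this automorphism when evaluated in $\A_2$. But $\ell_{f_{1,n}}^{\C}(a) = 1 \ne e = \ell_{f_{1,n}}^{\C}(\lnot a)$, and transporting through the term equivalence forces the same asymmetry in $\A_2[\mathscr{L}_\mathcal{F}^*]$ --- a contradiction. So the obstruction is an \emph{automorphism} argument on a five-element subalgebra, not a V-formation argument, and it cannot even be formulated until the atom-counting operations $f_{k,n}$ have been introduced.
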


It is known that $\VVV(\A_0), \VVV(\A_1), \VVV(\A_2)$, and $\VVV(\A_\omega)$ have the strong epimorphism surjectivity property (see \cite[Thm.~8.1]{Mak00}). Consequently, they are their own Beth companions by \cite[Thm.~11.9(vi)]{CKMIMPv3}. 
When viewed as Beth companions of themselves, they are obviously 
 congruence preserving and simple. Hence, in order to prove \cref{Thm : proper Beth companion}, it will be enough to establish the following.

\begin{Theorem}\label{Thm : proper Beth companion : main}
For every $n \geq 3$ the variety $\VVV(\A_n)$ has a congruence preserving Beth companion but no simple Beth companion.
\end{Theorem}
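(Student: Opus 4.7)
The plan is, for fixed $n \geq 3$, to exhibit a specific extendable implicit operation $f_n$ of $\VVV(\A_n)$ defined by a pp formula $\varphi_n$ with a genuine existential quantifier, and then to show that the pp expansion $\M_n := \SSS(\VVV(\A_n)[\L_{f_n}])$ is a congruence preserving Beth companion of $\VVV(\A_n)$ that is not equational.

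First, for the construction, I would produce a specific pp formula $\varphi_n(\bar x, y)$ that defines $f_n$ and whose intended interpretation in $\A_n$ refers to the coatom of $\A_n$ (or to an operation that essentially uses the coatom, such as the Boolean complement in the Boolean part $\mathcal{P}(n)$). Such an $f_n$ would witness the failure of strong epimorphism surjectivity for $\VVV(\A_n)$ when $n \geq 3$ (recall \cite[Thm.~8.1]{Mak00}). The use of an existential quantifier is essential: the coatom of $\A_n$ is not term-definable uniformly across $\VVV(\A_n)$, since the natural quotient $\A_n \twoheadrightarrow \mathcal{P}(n)$ identifies the coatom with $1$. Functionality of $\varphi_n$ in $\VVV(\A_n)$ would be verified by inspecting each subdirectly irreducible member, which I would enumerate using J\'onsson's theorem together with the depth-$\leq 2$ structure of the $\A_k$; extendability of $f_n$ would then follow from \cite[Cor.~8.14]{CKMIMP} by checking that $\varphi_n$ defines a total function on each SI.

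Second, $\M_n$ is a quasivariety by \cite[Thm.~10.3(ii)]{CKMIMP}. To see that it is a Beth companion, I would verify that in the enriched language every implicit operation of $\VVV(\A_n)$ becomes term-definable, reducing the verification to the SI members of $\M_n$. Congruence preservation of $\M_n$ would be established directly: every congruence of the Heyting reduct $\C\res_{\L_{\VVV(\A_n)}}$ of a member $\C \in \M_n$ is compatible with the new operation $g_{f_n}^\C$, using the explicit description of the congruences on each $\A_k$ (which correspond to Boolean ideals) and the concrete behaviour of $f_n$ on these algebras.

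Third, for the failure of equationality, by \cite[Thm.~11.7]{CKMIMP} every Beth companion of $\VVV(\A_n)$ is faithfully term equivalent relative to $\VVV(\A_n)$ to $\M_n$, so it suffices to show that no pp expansion $\SSS(\VVV(\A_n)[\L_\mathcal{G}])$ with $\mathcal{G} \subseteq \exteq(\VVV(\A_n))$ is a Beth companion. I would reduce this to the key claim that every implicit operation of $\VVV(\A_n)$ definable by a conjunction of equations is already explicit (term-definable). Given that claim, any such equational pp expansion is faithfully term equivalent to $\VVV(\A_n)$ itself, contradicting the failure of SESP for $n \geq 3$ via \cite[Thm.~8.1]{Mak00} combined with \cite[Thm.~11.6]{CKMIMP}. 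The main obstacle will be this last claim. My expected route is a free-algebra analysis: if $f$ is defined by $\bigwedge_i s_i(\bar x, y) \thickapprox t_i(\bar x, y)$ and is extendable, then in the free algebra of $\VVV(\A_n)$ on $\bar x$ the witnessing value of $y$ appears in some extension, and the combination of the equational constraints with the rigid relationship between the Boolean part of each $\A_k$ and its top should force the witness to lie already in the free algebra, hence to be a term in $\bar x$.
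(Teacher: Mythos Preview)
Your plan has two genuine gaps.

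\textbf{The single operation is not enough to make $\M_n$ a Beth companion.} Adding a constant for the coatom $e$ (or any single operation that merely refers to $e$) does not give $\M_n$ the strong epimorphism surjectivity property. Concretely, take $n\geq 3$ and let $a$ be an atom of $\A_n$. The subalgebra of $\A_n$ generated by $a$ together with the constant $e$ is $\{0,a,\lnot a,e,1\}\cong\A_2$, and in that subalgebra there is an automorphism swapping $a$ and $\lnot a$. But inside $\A_n$ the elements $a$ and $\lnot a$ are not interchangeable: $a$ has one atom below it, $\lnot a$ has $n-1$. This asymmetry cannot be detected by your single operation, so proper epic subalgebras persist and $\M_n$ is not a Beth companion. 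The paper solves this by adding an entire family $f_{1,n},\dots,f_{n-1,n}$ of unary operations, where $f_{k,n}$ essentially records whether an element has at most $k$ atoms below it. With all of these present, any two embeddings of a subalgebra into $\B_n=\A_n[\mathscr{L}_{\mathcal{F}_n}]$ are conjugate by an automorphism of $\B_n$; this yields the amalgamation property for $\mathsf{B}(n)_\textsc{fsi}$, hence (via arithmeticity and CEP) SESP, hence the Beth companion property. Your proposed route of ``verifying that every implicit operation becomes term-definable'' is not a workable substitute: without a concrete description of $\imp(\VVV(\A_n))$ there is nothing to check, and the paper never attempts this.

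\textbf{The non-equationality argument relies on an unproven claim.} You reduce everything to the assertion that every member of $\ext_{eq}(\VVV(\A_n))$ is explicit, and you concede that this is the main obstacle; the free-algebra sketch you offer does not explain why the equational witness must already live in the free algebra. The paper avoids this entirely with a direct contradiction that works precisely because of the atom-counting operations. In the subalgebra $\C=\{0,a,\lnot a,e,1\}$ of $\B_n$ one has $\ell_{f_{1,n}}^{\C}(a)=1$ but $\ell_{f_{1,n}}^{\C}(\lnot a)=e$. If $\mathsf{B}(n)$ were equational, the faithful term equivalence would send $\C$ to $\D[\mathscr{L}^*_\F]$ for $\D=\C{\upharpoonright}_\mathscr{L}\cong\A_2$, and $\rho(\ell_{f_{1,n}})$ would coincide with $g^\D$ for some $g\in\ext_{pp}(\VVV(\A_n))$. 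But $\D$ has an automorphism swapping $a$ and $\lnot a$, and any implicit operation must respect it, forcing $g^\D(\lnot a)=g^\D(a)=1\neq e$, a contradiction. Note that this argument crucially uses the asymmetry built into $f_{1,n}$; your coatom-only operation is symmetric in $a,\lnot a$ and would give no leverage here.
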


The proof of  \cref{Thm : proper Beth companion : main} proceeds through a series of observations. First,
 if an algebra $\A$ has a lattice reduct, then $\VVV(\A)$ is congruence distributive (see, e.g., \cite[Thm.~7.2]{CKMIMPv3}). Therefore, the following is an immediate consequence of 
 a version of J\'onsson’s Theorem for finitely subdirectly irreducible algebras (see, e.g., \cite[Thm.~2.12]{CKMIMPv3} and \cite[Thm. 5.6(2)]{Ber11}).

\begin{Proposition}\label{Prop : Jonsson lattice : easy}
Let $\A$ be a finite algebra with a lattice reduct. Then $\VVV(\A)_\textsc{fsi} \subseteq \HHH\SSS(\A)$.
\end{Proposition}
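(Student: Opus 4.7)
The plan is to combine the two cited results directly, observing that they were chosen precisely to give the statement in one line.

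First, I note that since $\A$ has a lattice reduct, the variety $\VVV(\A)$ is congruence distributive (as stated in \cite[Thm.~7.2]{CKMIMP}, and invoked already in the excerpt). This puts us in the setting where J\'onsson's theorem applies. The relevant version is the one for finitely subdirectly irreducible algebras, namely \cite[Thm.~2.12]{CKMIMP}, which guarantees that
\[
\VVV(\A)_\textsc{fsi} \subseteq \HHH\SSS\PPU(\A),
\]
where $\PPU$ denotes the class operator of ultraproducts (applied to the singleton class $\{\A\}$).

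Next, I use that $\A$ is finite. The appeal to \cite[Thm.~5.6(2)]{Ber11} supplies exactly the fact that any ultraproduct of a finite algebra is isomorphic to that algebra, so $\PPU(\A) \subseteq \III(\A)$. Substituting this into the J\'onsson inclusion and using that $\HHH$ and $\SSS$ absorb $\III$, we conclude
\[
\VVV(\A)_\textsc{fsi} \subseteq \HHH\SSS\PPU(\A) \subseteq \HHH\SSS\III(\A) = \HHH\SSS(\A),
\]
as desired.

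There is no genuine obstacle here: the only substantive ingredients are the congruence distributivity of lattice-based varieties, the FSI version of J\'onsson's theorem, and the triviality of ultraproducts of a finite algebra. The proof is essentially a one-line chain of inclusions once these three facts are named.
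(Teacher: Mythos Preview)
Your proof is correct and follows exactly the approach the paper intends: congruence distributivity from the lattice reduct, the FSI version of J\'onsson's Theorem to get $\VVV(\A)_\textsc{fsi} \subseteq \HHH\SSS\PPU(\A)$, and finiteness of $\A$ to collapse $\PPU(\A)$ to $\III(\A)$. The paper states the result as an immediate consequence of precisely these two citations, and your write-up simply spells out that one-line chain of inclusions.
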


As an application of Proposition \ref{Prop : Jonsson lattice : easy}, we obtain a transparent description of $\VVV(\A_n)_\textsc{fsi}$.

\begin{Proposition}\label{Prop : proper Beth completion : FSI members of V(An)}
For every $n \in \mathbb{N}$ we have $\VVV(\A_n)_\textsc{fsi} = \III( \A_0, \dots, \A_n) = \III\SSS(\A_n)$.
\end{Proposition}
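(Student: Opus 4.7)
The plan is to apply \cref{Prop : Jonsson lattice : easy}, which reduces the question to identifying the finitely subdirectly irreducible members of $\HHH\SSS(\A_n)$. I would attack this via two explicit classifications: first of the subalgebras of $\A_n$, then of the fsi homomorphic images of each $\A_k$ for $k \leq n$.

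For the subalgebras I would argue as follows. Let $\B \leq \A_n$. If $B \neq \{\emptyset, 1\}$, pick any nonempty $a \in B \cap \mathcal{P}(n)$; the defining rule of $\to$ gives $a \to \emptyset = n \smallsetminus a$, whence $n = a \cup (n \smallsetminus a) \in B$. A short check then shows that $B \smallsetminus \{1\}$ is closed under set-theoretic complementation in $\mathcal{P}(n)$, so it is a Boolean subalgebra of $\mathcal{P}(n)$ and hence isomorphic to $\mathcal{P}(m)$ for some $m \leq n$. Adding back the new top $1$, one obtains $\B \cong \A_m$. Since each such $\A_m$ also embeds into $\A_n$ (via any injection $m \hookrightarrow n$), this yields $\SSS(\A_n) = \III(\A_0, \ldots, \A_n)$.

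For the quotients, $\A_k$ is finite, so its congruences correspond bijectively to its principal filters $[a, 1]$ with $a \in \A_k$. Using the standard Heyting algebra identification $\A_k / \theta_{[a, 1]} \cong {\downarrow}a$ (with ${\downarrow}a$ carrying its natural Heyting structure and the quotient map being $x \mapsto x \wedge a$), I would split into cases on $a$. For $a = 1$ this recovers $\A_k$, which is subdirectly irreducible because $k$ is its unique coatom: any join of elements of $\mathcal{P}(k)$ stays in $\mathcal{P}(k)$, so $1$ is join-irreducible. For $a \in \mathcal{P}(k)$ one has ${\downarrow}a \cong \mathcal{P}(|a|)$, a Boolean algebra, which is fsi as a Heyting algebra if and only if it has at most two elements, i.e., $|a| \leq 1$; in that case ${\downarrow}a \cong \A_0$. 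Thus, up to isomorphism, the fsi nontrivial quotients of $\A_k$ are precisely $\A_k$ and $\A_0$.

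Combining these two classifications with \cref{Prop : Jonsson lattice : easy} yields
\[
\VVV(\A_n)_\textsc{fsi} \subseteq \HHH\SSS(\A_n) = \HHH(\III(\A_0, \ldots, \A_n)),
\]
whose fsi members lie in $\III(\A_0, \ldots, \A_n)$; the reverse inclusion is immediate from the embeddings $\A_k \hookrightarrow \A_n$ and the subdirect irreducibility of each $\A_k$. I expect the main technical point to be the quotient classification, since it rests on the nonobvious Heyting isomorphism $H / \theta_{[a, 1]} \cong {\downarrow}a$ and on the recognition that the resulting Boolean algebras fail to be finitely subdirectly irreducible past the two-element case.
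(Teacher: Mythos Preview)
Your proposal is correct and follows exactly the paper's route: apply \cref{Prop : Jonsson lattice : easy} to reduce to classifying the finitely subdirectly irreducible members of $\HHH\SSS(\A_n)$, which the paper dismisses ``by inspection'' while you spell out the subalgebra and quotient classifications explicitly. One tiny slip: a Boolean algebra is finitely subdirectly irreducible iff it has \emph{exactly} two elements (the trivial algebra is not fsi by the paper's conventions), so the condition should read $|a|=1$ rather than $|a|\leq 1$, but this does not affect your conclusion.
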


\begin{proof}
By Proposition \ref{Prop : Jonsson lattice : easy} we have $\VVV(\A_n)_\textsc{fsi} \subseteq \HHH\SSS(\A_n)$. Moreover, by inspection it is possible to check that (up to isomorphism) the finitely subdirectly irreducible members of $\HHH\SSS(\A_n)$ are  $\A_0, \dots, \A_n$.
Together with $\VVV(\A_n)_\textsc{fsi} \subseteq \HHH\SSS(\A_n) \subseteq \VVV(\A_n)$, this yields $\VVV(\A_n)_\textsc{fsi} = \III( \A_0, \dots, \A_n )$. Lastly, the equality $\III( \A_0, \dots, \A_n) = \III\SSS(\A_n)$ is an immediate consequence of the definition of $\A_0, \dots, \A_n$.
\end{proof}

\begin{Corollary}\label{Cor : proper Beth completion : V(An) = Q(An)}
For every $n \in \mathbb{N}$ we have $\VVV(\A_n) = \QQQ(\A_n)$.
\end{Corollary}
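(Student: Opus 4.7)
The plan is to observe that the nontrivial inclusion $\VVV(\A_n) \subseteq \QQQ(\A_n)$ follows immediately from the preceding proposition combined with Birkhoff's subdirect decomposition theorem, while the reverse inclusion $\QQQ(\A_n) \subseteq \VVV(\A_n)$ is automatic because every quasivariety is contained in the variety it generates (indeed $\QQQ(\A_n) \subseteq \VVV(\A_n)$ whenever the generator is a single algebra, since $\SSS\PPP(\A_n) \subseteq \SSS\HHH\PPP(\A_n) = \VVV(\A_n)$).

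For the forward direction, I would start with an arbitrary $\B \in \VVV(\A_n)$ and apply Birkhoff's subdirect representation theorem to write $\B$ as a subdirect product of a family $\{ \C_i : i \in I \}$ of subdirectly irreducible algebras, each necessarily lying in $\VVV(\A_n)_\textsc{si}$. Since every subdirectly irreducible algebra is finitely subdirectly irreducible, we have $\VVV(\A_n)_\textsc{si} \subseteq \VVV(\A_n)_\textsc{fsi}$, and by \cref{Prop : proper Beth completion : FSI members of V(An)} the latter equals $\III\SSS(\A_n)$. Hence each $\C_i$ embeds into $\A_n$, so $\B$ embeds into $\prod_{i \in I} \C_i \in \SSS\PPP(\A_n) \subseteq \QQQ(\A_n)$, giving $\B \in \SSS\PPP(\A_n) \subseteq \QQQ(\A_n)$.

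There is no substantive obstacle here: all the work has already been done in establishing \cref{Prop : proper Beth completion : FSI members of V(An)}. The only thing to keep in mind is the elementary point that $\textsc{si} \subseteq \textsc{fsi}$, which lets us bypass the possibly strict inclusion and land every subdirect factor inside $\SSS(\A_n)$. The corollary is essentially a packaging of the previous proposition into the standard form stating that a finitely generated variety with finitely many subdirectly irreducibles, all contained in $\SSS$ of the generator, coincides with the corresponding quasivariety.
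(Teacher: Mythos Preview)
Your proposal is correct and follows essentially the same route as the paper: both arguments combine the Subdirect Decomposition Theorem with \cref{Prop : proper Beth completion : FSI members of V(An)} to obtain $\VVV(\A_n) \subseteq \III\SSS\PPP\III\SSS(\A_n) \subseteq \QQQ(\A_n)$, and then note that the reverse inclusion is automatic. The paper writes this as a single chain of class inclusions rather than tracking an individual algebra $\B$, but the content is identical.
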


\begin{proof}
From the Subdirect Decomposition Theorem (see, e.g., \cite[Thm.~8.6]{BuSa00}) and \cref{Prop : proper Beth completion : FSI members of V(An)} it follows that
\[
\VVV(\A_n) = \III\SSS\PPP(\VVV(\A_n)_\textsc{fsi}) = \III\SSS\PPP\III\SSS(\A_n) \subseteq \QQQ(\A_n).
\]
Since the inclusion $\QQQ(\A_n) \subseteq \VVV(\A_n)$ always holds, we conclude that $\VVV(\A_n) = \QQQ(\A_n)$.
\end{proof}

We will make use of the following properties   typical of the Heyting algebras of the form $\A_\kappa$ for a cardinal $\kappa$. 
As all of them are immediate consequences of the definition of $\A_\kappa$, their proof is omitted. First, observe that $\A_\kappa$ has a second largest element (namely, $\kappa$) that we denote by $e$. For every $a, b \in A_\kappa$ we have
\begin{align}
a \lor b = 1 &\iff a = 1 \text{ or }b = 1;\label{Eq : tricks for An : Beth completion : 1}\\
0 < a \leq e & \iff a \lor \lnot a = e ;\label{Eq : tricks for An : Beth completion : 2}\\
a \in \{ 0, e, 1 \} & \iff \lnot \lnot a = 1;\label{Eq : tricks for An : Beth completion : 3}\\
(a \ne e \text{ or } a=0) & \iff \lnot \lnot a = a.\label{Eq : tricks for An : Beth completion : 3b}
\end{align}
We recall that an element $a$ of an algebra $\B$ with a bounded lattice reduct is an \emph{atom} when $a \ne 0$ and there exists no $b \in B$ such that $0 < b < a$. 
 To simplify notation, we will make use of the following shorthands for every algebra $\B$ with a bounded lattice reduct and $a \in B$:
\begin{align*}
\mathsf{at}(\B) &= \{ b \in B : b \text{ is an atom of $\B$}\};\\
\mathsf{at}_\B(a) &= \{ b \in \mathsf{at}(\B) : b \leq a\}.
\end{align*}
Moreover, for every $\B \leq \A_n$ and $a \in  B$ the following holds:
\begin{align}
&\text{if } a \neq 1 
 \text{ then } a = \bigvee \mathsf{at}_{\B}(a);
\label{Eq : tricks for An : Beth completion : 4}\\
\text{if $b \in \mathsf{at}(\B)$, then}& \text{ either ($b \leq a$ and $b \nleq \lnot a$) or ($b \nleq a$ and $b \leq \lnot a$)}.\label{Eq : tricks for An : Beth completion : 5}
\end{align}
We also rely on the following properties that hold in every Heyting algebra. First, for every $a_1, \dots, a_m \in A_\kappa$,
\begin{equation}
\bigwedge_{i = 1}^{m} \lnot a_i = 1 \iff a_i = 0 \text{ for every }i \leq m.
\label{Eq : tricks for An : Beth completion : 6}
\end{equation}
Second, for every $a, b \in A_\kappa$,
\begin{align}
a \leq b &\iff a \to b = 1;\label{Eq : tricks for An : Beth completion : 7}\\
a \leq b & \, \, \, \Longrightarrow \lnot \lnot a \leq \lnot \lnot b.\label{Eq : tricks for An : Beth completion : 8}
\end{align}

Now, fix  $n \geq 3$. For each positive $m \leq n-1$ let $s_{m,n}$ and $d$ denote the terms
\[
s_{m, n} = \bigvee_{i = 1}^{n+1} z_i^m \, \, \text{ and }\, \,  d = x \lor \lnot x,
\]  
where $x, z_1^m, \dots, z_{n+1}^m$ are variables.
Then let $\psi_{m, n}(x, y, z_1^m, \dots, z_{n+1}^m)$ be the conjunction of the following formulas:
\begin{align*}
&\bigsqcap_{i = 1}^{n+1} (d(x) \thickapprox d(z_i^m));\\  
d(x&) \lor \lnot \lnot (x \lor s_{m, n}) \thickapprox y;\\
\Big((s_{m, n} \to x) \land \bigwedge^{m+1}_{\substack{ i, j = 1 \\ i \ne j}}\lnot  (z_i^m \land & z_j^m)\Big) \lor \Big((s_{m, n} \to \lnot x) \land \bigwedge^{n+1}_{\substack{i, j = m+2 \\ i \ne j}}\lnot (z_i^m \land z_j^m)\Big) \thickapprox 1.
\end{align*}
For each positive $k \leq n-1$ let $\gamma_{k, n}(x, y, z_1^1, \dots, z_{n+1}^1, \dots, z_1^k, \dots, z_{n+1}^k, w_1, \dots, w_k)$ be the formula
\[
\Big(y \thickapprox \bigvee_{m = 1}^k w_m\Big) \sqcap  \bigsqcap_{m = 1}^k \psi_{m, n}(x, w_m, z_1^m, \dots, z_{n+1}^m)
\]
and define
\[
\varphi_{k, n}(x, y) = \exists z_1^1, \dots, z_{n+1}^1, \dots, z_1^k, \dots, z_{n+1}^k, w_1, \dots, w_k \gamma_{k, n}.
\]
Observe that $\varphi_{k, n}(x, y)$ is a pp formula for every $n \geq 3$ and positive $k \leq n-1$. We will prove the following.

\begin{Proposition}\label{Prop : proper Beth completion : varphi is functional}
For every $n \geq 3$, positive $k \leq n-1$, and $a, b \in A_n$,
\begin{align*}
\A_n \vDash \varphi_{k, n}(a, b) \iff &  \text{ either }( a \in \{ 0, e, 1 \} \text{ and }b = 1)\\
& \text{ or } (0 < a < e \text{ and }b = 1 \text{ and the number of atoms below $a$ is $\leq k$})\\
&\text{ or }(0 < a < e = b \text{ and the number of atoms below $a$ is $\geq k+1$}).
\end{align*}
\end{Proposition}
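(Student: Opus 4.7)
The plan is to decompose the proof along the structure of $\varphi_{k,n}$: for each $1 \leq m \leq k$, I would first characterize the set $W_m(a) = \{w \in A_n : \A_n \vDash \exists z_1^m, \dots, z_{n+1}^m \, \psi_{m, n}(a, w, z_1^m, \dots, z_{n+1}^m)\}$, and then combine via $b = \bigvee_{m=1}^{k} w_m$ using property (\ref{Eq : tricks for An : Beth completion : 1}), which states that a join in $\A_n$ equals $1$ iff one of its summands does. The first conjunct of $\psi_{m,n}$ forces $d(z_i^m) = d(a)$ for every $i$, dividing the analysis into two regimes via the dichotomy $d(c) \in \{1, e\}$: either $a \in \{0, 1\}$ with every $z_i^m \in \{0, 1\}$, or $0 < a \leq e$ with every $z_i^m$ a nonempty subset of $n$. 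The middle conjunct fixes $w = d(a) \lor \lnot\lnot(a \lor s_{m,n})$, so $w$ is determined by $a$ together with $s_{m,n} = z_1^m \lor \dots \lor z_{n+1}^m$. The third conjunct is a disjunction equal to $1$, and (\ref{Eq : tricks for An : Beth completion : 1}) forces one of its two alternatives to equal $1$; unfolding via (\ref{Eq : tricks for An : Beth completion : 6}) and (\ref{Eq : tricks for An : Beth completion : 7}), I call LEFT the alternative $s_{m, n} \leq a$ together with $z_1^m, \dots, z_{m+1}^m$ pairwise disjoint, and RIGHT the alternative $s_{m, n} \leq \lnot a$ together with $z_{m+2}^m, \dots, z_{n+1}^m$ pairwise disjoint.

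Next, I would carry out the case analysis on $a$. When $a \in \{0, 1\}$, we have $d(a) = 1$, so $w = 1$ regardless of the witnesses, and the choice $z_i^m = 0$ verifies satisfiability. When $a = e$, RIGHT is impossible because $\lnot e = 0$ forces $s_{m, n} = 0$, contradicting $z_i^m > 0$; LEFT is achievable and yields $w = e \lor \lnot\lnot e = 1$. The substantial case is $0 < a < e$, where I let $k_a$ denote the number of atoms below $a$. LEFT needs $m+1$ pairwise disjoint nonempty subsets of $a$, so it is satisfiable exactly when $k_a \geq m + 1$, in which case $a \lor s_{m, n} = a$ together with $\lnot\lnot a = a$ (by (\ref{Eq : tricks for An : Beth completion : 3b})) gives $w = e \lor a = e$. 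RIGHT needs $n - m$ pairwise disjoint nonempty subsets of $\lnot a$, so it is satisfiable exactly when $k_a \leq m$; here $w = e \lor \lnot\lnot(a \lor s_{m, n})$ equals $1$ when $s_{m, n} = \lnot a$ (so $a \lor s_{m, n} = e$ and $\lnot\lnot e = 1$) and equals $e$ otherwise. Crucially, when $k_a = m$ the equality $|\lnot a| = n - m$ is tight and forces $s_{m, n} = \lnot a$, pinning $w = 1$; when $m > k_a$ there is slack and both $w = 1$ and $w = e$ are achievable. Assembling these observations yields $W_m(a) = \{e\}$ when $m < k_a$, $W_m(a) = \{1\}$ when $m = k_a$, and $W_m(a) = \{e, 1\}$ when $m > k_a$.

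Finally, I would take the join over $m$ to obtain the three cases of the statement. If $a \in \{0, e, 1\}$, every $w_m = 1$ and so $b = 1$. If $0 < a < e$ and $k_a \leq k$, the forced value $w_{k_a} = 1$ together with (\ref{Eq : tricks for An : Beth completion : 1}) yields $b = 1$ regardless of the other $w_m$. If $0 < a < e$ and $k_a \geq k + 1$, then every $m \leq k$ satisfies $m < k_a$, forcing $w_m = e$ and hence $b = e$. The converse direction follows in each case by explicit constructions: take the $z_i^m$ to be distinct atoms of $a$ when realizing LEFT, and distinct atoms of $\lnot a$ (supplemented by a cover of any leftover atoms through $z_1^m$) when realizing RIGHT. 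The main obstacle I anticipate is the careful dimensional bookkeeping inside RIGHT, particularly the tight extremal count at $m = k_a$ that forces $w = 1$; this is precisely the mechanism that separates cases (b) and (c) in the statement.
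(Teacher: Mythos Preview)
Your proposal is correct and follows essentially the same route as the paper: both hinge on the dichotomy $d(a)\in\{1,e\}$, the splitting of the third conjunct into the LEFT/RIGHT alternatives via (\ref{Eq : tricks for An : Beth completion : 1}), (\ref{Eq : tricks for An : Beth completion : 6}), (\ref{Eq : tricks for An : Beth completion : 7}), and the pigeonhole count on atoms that makes LEFT feasible iff $k_a\geq m+1$ and RIGHT feasible iff $k_a\leq m$, with the tight case $m=k_a$ forcing $s_{m,n}=\lnot a$ and hence $w=1$. The only organizational difference is that you first tabulate the full set $W_m(a)$ for every $m$ and then take the join, whereas the paper argues the forward implication more economically by singling out the one relevant index (namely $m=k_a$ when $k_a\leq k$) and ignoring the others; your version is slightly more informative but not materially different.
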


\begin{proof}
We begin by proving the implication from left to right. Suppose that $\A_n \vDash \varphi_{k, n}(a, b)$. Then there exist $c_1^1, \dots, c_{n+1}^1, \dots, c_1^k, \dots, c_{n+1}^k, d_1, \dots, d_k \in A_n$ such that 
\begin{equation}
b = \bigvee_{m = 1}^k d_m\label{Eq : proper Beth completion : total 1}
\end{equation}
and for every positive $m \leq k$ both
\begin{align}
a \lor \lnot a &= c_1^m \lor \lnot c_1^m = \dots = c_{n+1}^m \lor \lnot c_{n+1}^m;\label{Eq : proper Beth completion : total 2}\\
d_m &= a \lor \lnot a \lor \lnot \lnot \Big(a \lor \bigvee_{i = 1}^{n+1} c_i^m\Big)\label{Eq : proper Beth completion : total 3}
\end{align}
and 
\[
1  =\Big(\Big(\bigvee_{i = 1}^{n+1}c_i^m \to a\Big) \land \bigwedge^{m+1}_{\substack{ i, j = 1 \\ i \ne j}}\lnot (c_i^m \land c_j^m)\Big) \lor \Big(\Big(\bigvee_{i = 1}^{n+1}c_i^m \to \lnot a\Big) \land \bigwedge^{n+1}_{\substack{i, j = m+2 \\ i \ne j}}\lnot (c_i^m \land c_j^m)\Big).
\]
Together with (\ref{Eq : tricks for An : Beth completion : 1}), (\ref{Eq : tricks for An : Beth completion : 6}), and (\ref{Eq : tricks for An : Beth completion : 7}), the above display yields that for every $m \leq k$,
\begin{equation}\label{Eq : proper Beth completion : total 4}
\begin{split}
\text{either }&\Big(\bigvee_{i = 1}^{n+1}c_i^m \leq a \text{ and } c_i^m \land c_j^m = 0 \text{ for all distinct $i, j$ with   $1 \leq	 i, j \leq m+1$}\Big) \\
\text{ or } &\Big(\bigvee_{i = 1}^{n+1}c_i^m \leq \lnot a \text{ and } c_i^m \land c_j^m = 0 \text{ for all distinct $i, j$ with   $m+2 \leq	 i, j \leq n+1$}\Big).
\end{split}
\end{equation}

By the definition of $\A_n$ we have two cases: either $a \in \{ 0, e, 1 \}$ or $0 < a < e$. First, suppose that $a \in \{ 0, e, 1 \}$. We need to prove that $b = 1$. To this end, observe that for every $m \leq k$,
\[
\lnot \lnot a \leq \lnot \lnot \Big(a \lor \bigvee_{i = 1}^{n+1} c_i^m\Big) \leq a \lor \lnot a \lor \lnot \lnot \Big(a \lor \bigvee_{i = 1}^{n+1} c_i^m\Big) = d_m,
\]
where the first inequality holds by (\ref{Eq : tricks for An : Beth completion : 8}), the second is straightforward, and the last equality by (\ref{Eq : proper Beth completion : total 3}). Since $a \in \{ 0, e, 1 \}$, we have $\lnot \lnot a = 1$ by (\ref{Eq : tricks for An : Beth completion : 3}). Together with the above display, we obtain $d_m = 1$ for every $m \leq k$. By (\ref{Eq : proper Beth completion : total 1}) we conclude that $b = 1$, as desired.

Next, we consider the case where $0 < a < e$. In this case,  $a \lor \lnot a = e$ by (\ref{Eq : tricks for An : Beth completion : 2}). Therefore, from (\ref{Eq : proper Beth completion : total 2}) it follows that $c_i^m \lor \lnot c_i^m = e$ for all positive $m \leq k$ and $i \leq n+1$. By (\ref{Eq : tricks for An : Beth completion : 2}) this yields
\begin{equation}\label{Eq : proper Beth completion : total 5}
0 < c_i^m  \text{ for all positive }m \leq k \text{ and } i \leq n+1.
\end{equation}

We have two subcases: either the number of atoms below $a$ is $\leq k$ or $\geq k+1$. First, suppose that it is $p \leq k$. We need to prove that $b = 1$. As $\A_n$ has $n$ atoms by definition, the number of atoms below $\lnot a$ is $n-p$ by (\ref{Eq : tricks for An : Beth completion : 5}). From (\ref{Eq : proper Beth completion : total 4}) in the case where $m = p$  it follows that 
\begin{align*}
\text{either }&\Big(\bigvee_{i = 1}^{n+1}c_i^p \leq a \text{ and } c_i^p \land c_j^p = 0 \text{ for all distinct $i, j$ with   $1 \leq	 i, j \leq p+1$}\Big) \\
\text{ or } &\Big(\bigvee_{i = 1}^{n+1}c_i^p \leq \lnot a \text{ and } c_i^p \land c_j^p = 0 \text{ for all distinct $i, j$ with   $p+2 \leq	 i, j \leq n+1$}\Big).
\end{align*}
The right hand side of the first line of the above display implies that the sets of atoms below each of the $c_i^p$ for $1 \leq i \leq p+1$ must be pairwise disjoint.
Moreover, observe that $\A_n$ is finite and, therefore, each nonzero element is above an atom. Together with \eqref{Eq : proper Beth completion : total 5}, this implies that there is at least one atom below each $c_i^p$. Consequently, there must be at least $p+1$ distinct atoms below the join of $c_{1}^p, \dots, c_{p+1}^p$. 
Together with the left hand side of the first line of the above display, this implies that the number of atoms below $a$ is $\geq p+1$, which is false by assumption. Therefore, 
\[
\bigvee_{i = 1}^{n+1}c_i^p \leq \lnot a \text{ and } c_i^p \land c_j^p = 0 \text{ for all distinct $i, j$ with   $p+2 \leq	 i, j \leq n+1$}.
\]
As before, the right hand side of the above display and (\ref{Eq : proper Beth completion : total 5}) imply that the number of distinct atoms below the join of $c_{p+2}^p, \dots, c_{n+1}^p$ must be at least $n-p$.
  Observe that by the left hand side of the above display and \eqref{Eq : tricks for An : Beth completion : 4} it follows that every atom below the join of $c_{p+2}^p, \dots, c_{n+1}^p$ must be also below $\lnot a$. As by assumption the number of atoms below $\lnot a$ is precisely $n-p$, the set of atoms below $\lnot a$ must coincide with the set of atoms below $c_{p+2}^p \lor \dots \lor c_{n+1}^p$. Therefore, using \eqref{Eq : tricks for An : Beth completion : 4} we obtain  \[
\lnot a = \bigvee_{i= 1}^{n+1}c_i^{p}.
\] 
Together with 
(\ref{Eq : proper Beth completion : total 3}), this yields
\[
 a \lor \lnot a \lor \lnot \lnot (a \lor \lnot a ) 
 = 
 a \lor \lnot a \lor \lnot \lnot (a \lor \bigvee_{i= 1}^{n+1}c_i^{p}) = d_p.
\]
As $0 < a < e$ by assumption, from (\ref{Eq : tricks for An : Beth completion : 2}) and (\ref{Eq : tricks for An : Beth completion : 3}) it follows that $\lnot \lnot (a \lor \lnot a) = \lnot \lnot e = 1$. Therefore, the above display yields
\[
1 = a \lor \lnot a \lor 1 = a \lor \lnot a \lor \lnot \lnot (a \lor \lnot a )  
  =   d_p.
\]
By (\ref{Eq : proper Beth completion : total 1}) we conclude that $b = 1$, as desired.
It only remains to consider the case where the number of atoms below $a$ is $\geq k+1$. We need to prove that $b = e$. As $\A_n$ has $n$ atoms by definition, the number of atoms below $\lnot a$ is $\leq n-k -1$ by (\ref{Eq : tricks for An : Beth completion : 5}). Then consider a positive $m \leq k$. Since $n-k-1 < n- m$,
the number of atoms below $\lnot a$ is $< n-m$. Since (\ref{Eq : proper Beth completion : total 5}) and the second line of (\ref{Eq : proper Beth completion : total 4}) would imply that the number of atoms below $\lnot a$ is $\geq n-m$, we conclude that the first line of (\ref{Eq : proper Beth completion : total 4}) holds. Consequently, 
\begin{equation}\label{Eq : proper Beth completion : definition of phi : qwert}
\bigvee_{i = 1}^{n+1}c_i^m \leq a.
\end{equation}
We will prove that the following holds:
\[
e = a \lor \lnot a \leq a \lor \lnot a \lor \lnot \lnot \Big(a \lor \bigvee_{i= 1}^{n+1}c_i^{m}\Big) \leq a \lor \lnot a \lor \lnot \lnot (a \lor a) = a \lor \lnot a = e.
\]
The first and the last steps above hold by $0 < a \leq e$ and (\ref{Eq : tricks for An : Beth completion : 2}), the second is straightforward, the third by (\ref{Eq : proper Beth completion : definition of phi : qwert}) and (\ref{Eq : tricks for An : Beth completion : 8}), and the fourth by $a = \lnot \lnot a$, which follows from  $a \ne e$ 
and (\ref{Eq : tricks for An : Beth completion : 3b}).
Together with (\ref{Eq : proper Beth completion : total 3}), the above display yields $d_m = e$. As this holds for every $m \leq k$, from (\ref{Eq : proper Beth completion : total 1}) it follows that $b = e$, as desired.

Next we prove the implication from right to left in the statement. 
Recall from the definition of $\varphi_{k,n}$ that it suffices to find $c_i^m, d_m$ for $i \leq n+1$ and $m \leq k$ such that 
\begin{equation} \label{Eq : to show psi}
    \A_n\vDash \Big(b \thickapprox \bigvee_{m = 1}^k  d_m \Big) \sqcap \bigsqcap_{m = 1}^k \psi_{m,n}(a,  d_m, c_1^m, \dots, c_{n+1}^m).
\end{equation}
First, suppose that $a \in \{0,1\}$. In this case, $b = 1$ by assumption.  
Choose $c_i^m = 0$ and $d_m= 1$ 
for all $i \leq n+1$ and $m \leq k$. 
Clearly, we have
\[
b =1 = \bigvee_{m = 1}^k  d_m.
\]
From
\eqref{Eq : tricks for An : Beth completion : 1} it follows that for each $m \leq k$  we have $d(a) = 1$ and, therefore,
\begin{align*}
&d(a) = 1 = d(0) = d(c_i^m) \text{  for each } i \leq n+1 \text{ and }\\ 
&d(a) \lor \lnot \lnot \Big(a \lor \bigvee_{i = 1}^{n+1} c_i^m\Big) = 1 \lor \lnot\lnot a = 1  = d_m,
\end{align*}
which proves the validity of the first two conjuncts of $\psi_{m,n}$. Moreover, it holds that 
\begin{align*}
\Big(\Big(&\bigvee_{i = 1}^{n+1} c_i^m \to a\Big)  \land \bigwedge^{m+1}_{\substack{ i, j = 1 \\ i \ne j}}\lnot  (c_i^m \land c_j^m)\Big) \lor \Big(\Big(\bigvee_{i = 1}^{n+1} c_i^m \to \lnot a\Big) \land \bigwedge^{n+1}_{\substack{i, j = m+2 \\ i \ne j}}\lnot (c_i^m \land c_j^m)\Big)\\
=&\Big((0 \to a) \land \bigwedge^{m+1}_{\substack{ i, j = 1 \\ i \ne j}}\lnot  0\Big) \lor \Big((0 \to \lnot a) \land \bigwedge^{n+1}_{\substack{i, j = m+2 \\ i \ne j}}\lnot 0\Big) = 1.
\end{align*}
This establishes \eqref{Eq : to show psi} for the case where $a \in \{0,1\}$.

It only remains to consider the case where  $0 < a < 1$. Observe that choosing $c_i^m \in \mathsf{at}(\A_n)$ for all $i \leq n+1$ and $m \leq k$ guarantees that 
\begin{equation} \label{Eq : first psi eq}
d(a) = e = d(c_i^m) \text{ for all } i \leq n+1 \text{ and } m \leq k
\end{equation} by \eqref{Eq : tricks for An : Beth completion : 2}.
Moreover,  \eqref{Eq : tricks for An : Beth completion : 4} implies that, in order to guarantee that 
\[
\Big(\Big(\bigvee_{i = 1}^{n+1} c_i^m \to a\Big) \land \bigwedge^{m+1}_{\substack{ i, j = 1 \\ i \ne j}}\lnot  (c_i^m \land c_j^m)\Big) \lor \Big(\Big(\bigvee_{i = 1}^{n+1} c_i^m \to \lnot a\Big) \land \bigwedge^{n+1}_{\substack{i, j = m+2 \\ i \ne j}}\lnot (c_i^m \land c_j^m)\Big) = 1,
\] 
it suffices to choose $c_i^m$ so that one of the following holds:
\begin{align} 
    &\{c_1^{m}, \dots, c_{n+1}^m\} = \mathsf{at}_{\A_n}(a) \text{ and } c_i^m \neq c_j^m \text{ for all }  i,j\in \{1, \dots, m+1\} \text{ with } i \neq j, \label{Eq : long equation conditions 1}\\
    &\{c_1^{m}, \dots, c_{n+1}^m\} = \mathsf{at}_{\A_n}(\lnot a) \text{ and } c_i^m \neq c_j^m \text{ for all } i ,j \in \{m+2, \dots, n+1\} \text{ with } i \neq j.\label{Eq : long equation conditions 2}
\end{align}

We distinguish three cases. First, let $a = e$. Then $b = 1$ by assumption. Choose $c_i^m \in \mathsf{at}(\A_n) = \mathsf{at}_{\A_n}(e)$ for all $i \leq n+1$ and $m \leq k$ such that $\{c_1^m, \dots, c_n^m\}$ are precisely the $n$ distinct atoms of $\A_n$ and let $d_m  = 1$ for each $m \leq k$. Then condition \eqref{Eq : long equation conditions 1} is satisfied, since $m \leq k \leq n-1$, and thus $m+1 \leq n$. 
Therefore, to verify \eqref{Eq : to show psi}, it only remains to observe that for each $m \leq k$ we have
\[
d(a) \lor \lnot \lnot \Big( a \lor \bigvee_{i = 1}^{n+1} c_i^m\Big) = d(e) \lor \lnot \lnot (a \lor e) = d(e) \lor \lnot \lnot e = 1 =  d_m,
\]
which is true by  \eqref{Eq : tricks for An : Beth completion : 3} and $a = e$.

Next we consider the case where $0 < a < e$ and $|\mathsf{at}_{\A_n}(a)| = p \leq k$. Then $b = 1$ by assumption.
For all $m < p$ and $i \leq n+1$ consider $c_i^m \in \mathsf{at}_{\A_n}(a)$ such that $\{c_{1}^m, \dots, c_{p}^m\} = \mathsf{at}_{\A_n}(a)$ and $ d_m  = e$. 
Moreover, for all $p \leq m \leq k$ and $i \leq n+1$ consider $c_i^m \in \mathsf{at}_{\A_n}(\lnot a)$ such that $\{c_{p+2}^m, \dots, c_{n+1}^m\} = \mathsf{at}_{\A_n}(\lnot a)$ and $d_m = 1$. 
Then for $m < p$ condition \eqref{Eq : long equation conditions 1} is satisfied and by \eqref{Eq : tricks for An : Beth completion : 2},   (\ref{Eq : tricks for An : Beth completion : 3}), \eqref{Eq : tricks for An : Beth completion : 3b}, and $0 < a < e$ we have 
\[
d(a) \lor \lnot \lnot \Big( a \lor \bigvee_{i = 1}^{n+1} c_i^m\Big) = d(a) \lor \lnot \lnot ( a \lor a) = e =  d_m.
\]
On the other hand, for every $m$ such that $p \leq m\leq k$ condition \eqref{Eq : long equation conditions 2} is satisfied. Moreover, using \eqref{Eq : tricks for An : Beth completion : 2}, \eqref{Eq : tricks for An : Beth completion : 3}, and $0 < a < e$, we obtain 
\[
d(a) \lor \lnot \lnot \Big( a \lor \bigvee_{i = 1}^{n+1} c_i^m\Big) = d(a) \lor \lnot \lnot ( a \lor \lnot a) = e \lor \lnot \lnot e = 1 =  d_m.
\]
Since $1 = \bigvee_{m \leq k}  d_m$ (because $d_k = 1$), this verifies that \eqref{Eq : to show psi} holds.

It only remains to consider the case where $0 < a < e$ and $|\mathsf{at}_{\A_n}(a)| = p \geq k+1$. In this case, we have $b = e$ by assumption. Then for all $i \leq n+1$ and $m \leq k$ consider 
 $c_i^m \in \mathsf{at}_{\A_n}(a)$  such that $\{c_{1}^{m}, \dots, c_{p}^{m}\} = \mathsf{at}_{\A_n}(a)$. Also choose $ d_m = e$ for each $m \leq k$.
Then  \eqref{Eq : long equation conditions 1} it satisfied because $m + 1 \leq k +1 \leq p$. 
Therefore, to conclude the proof of \eqref{Eq : to show psi}, it only remains to observe that for each $m \leq k$ we have
\[
d(a) \lor \lnot \lnot \Big( a \lor \bigvee_{i = 1}^{n+1} c_i^m\Big) = d(a) \lor \lnot \lnot ( a \lor a) = e = d_m,
\]
which holds by 
\eqref{Eq : tricks for An : Beth completion : 2},  
\eqref{Eq : tricks for An : Beth completion : 3b}, and $0 < a < e$. 
This completes the proof.
\end{proof}

As a consequence of Proposition \ref{Prop : proper Beth completion : varphi is functional}, we get the following. 

\begin{Corollary}\label{Cor : proper Beth completion : fnk is extendable}
For every $n \geq 3$ and positive $k \leq n-1$ the formula $\varphi_{k, n}$ defines an implicit operation $f_{k, n} \in \ext_{pp}(\VVV(\A_n))$ such that $f_{k,n}^{{\A}_n}$ is total and for every $a \in A_n$,
\[
 f_{k,n}^{{\A}_n}(a)= \begin{cases}
  1  & \text{if }a \in \{ 0, e, 1 \}; \\
    1  & \text{if }0 < a < e  \text{ and } |\mathsf{at}_{\A_n}(a)| \leq k;\\ 

    e  & \text{if }0 < a < e  \text{ and } |\mathsf{at}_{\A_n}(a)| \geq k+1. 
\end{cases}
\]
\end{Corollary}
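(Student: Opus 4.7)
The plan is to deduce this Corollary directly from \cref{Prop : proper Beth completion : varphi is functional} together with the description of the finitely subdirectly irreducible members of $\VVV(\A_n)$ provided by \cref{Prop : proper Beth completion : FSI members of V(An)} and the extendability criterion of \cite[Cor.~8.14]{CKMIMP}. The hard work is already done in \cref{Prop : proper Beth completion : varphi is functional}; what remains is packaging.

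First, I would inspect the biconditional in \cref{Prop : proper Beth completion : varphi is functional} to read off the action of $\varphi_{k,n}$ on $\A_n$. The three mutually exclusive clauses on the right-hand side partition $A_n$ according to whether $a \in \{0,e,1\}$, or $0 < a < e$ with $|\mathsf{at}_{\A_n}(a)| \leq k$, or $0 < a < e$ with $|\mathsf{at}_{\A_n}(a)| \geq k+1$; and in each clause exactly one value of $b$ (namely $1$, $1$, or $e$, respectively) makes $\A_n \vDash \varphi_{k,n}(a,b)$ true. Consequently $\varphi_{k,n}$ is functional on $\A_n$ and $\varphi_{k,n}^{\A_n}$ is the total unary function described in the statement.

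Next, to promote this to $f_{k,n} \in \extpp(\VVV(\A_n))$, I would appeal to \cite[Cor.~8.14]{CKMIMP}, exactly as in \cref{Claim : varphi in ext}: it suffices to show that every subdirectly irreducible member of $\VVV(\A_n)$ embeds into some member of $\VVV(\A_n)$ on which $\varphi_{k,n}$ defines a total unary function. By \cref{Prop : proper Beth completion : FSI members of V(An)} we have
\[
\VVV(\A_n)_{\textsc{si}} \subseteq \VVV(\A_n)_{\textsc{fsi}} = \III\SSS(\A_n),
\]
so every SI member of $\VVV(\A_n)$ is (up to isomorphism) a subalgebra of $\A_n$, and by the previous paragraph $\varphi_{k,n}$ is total on $\A_n$ itself. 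Invoking \cite[Cor.~8.14]{CKMIMP} then yields that $\varphi_{k,n}$ defines an extendable implicit operation $f_{k,n}$ of $\VVV(\A_n)$, whose value on $\A_n$ is the function already extracted.

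The only step that could be called an obstacle is verifying that no SI member of $\VVV(\A_n)$ slips outside $\III\SSS(\A_n)$, but this is immediate from \cref{Prop : proper Beth completion : FSI members of V(An)}, so the Corollary really is a short corollary once \cref{Prop : proper Beth completion : varphi is functional} is in hand.
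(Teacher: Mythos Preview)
Your argument is correct, but it takes a different route from the paper. You invoke \cite[Cor.~8.14]{CKMIMP} together with \cref{Prop : proper Beth completion : FSI members of V(An)} to check that every subdirectly irreducible member of $\VVV(\A_n)$ embeds into $\A_n$, on which $\varphi_{k,n}$ is total; this mirrors the strategy of \cref{Claim : varphi in ext} in Section~2. The paper instead first uses \cite[Cor.~3.11]{CKMIMP} to transfer functionality from $\A_n$ to $\QQQ(\A_n)$, then invokes \cref{Cor : proper Beth completion : V(An) = Q(An)} (that $\VVV(\A_n) = \QQQ(\A_n)$) to conclude that $\varphi_{k,n}$ defines an implicit operation of $\VVV(\A_n)$, and finally appeals to \cite[Prop.~8.11(ii)]{CKMIMP} with $\K = \VVV(\A_n) = \QQQ(\A_n)$ and $\mathsf{M} = \{\A_n\}$ for extendability. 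Both arguments rest on essentially the same structural fact (the FSI members are exactly $\III\SSS(\A_n)$, equivalently $\VVV(\A_n) = \QQQ(\A_n)$); your version packages it via the SI-extendability criterion, while the paper packages it via the quasivariety description and a generator-based extendability criterion. Neither approach is materially shorter or more general than the other here.
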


\begin{proof}
In view of Proposition \ref{Prop : proper Beth completion : varphi is functional}, the pp formula $\varphi_{k, n}$ is functional in $\A_n$. By \cite[Cor.~3.11]{CKMIMPv3} it is also functional in $\QQQ(\A_n)$. In view of \cref{Cor : proper Beth completion : V(An) = Q(An)}, this means that $\varphi_{k, n}$ is functional in $\VVV(\A_n)$  and, therefore, defines an implicit operation $f_{k, n} \in \imppp(\VVV(\A_n))$. From \cref{Prop : proper Beth completion : varphi is functional} it follows that $f_{k, n}^{\A_n}$ is total and defined as in the statement. 
As $f_{k, n}^{\A_n}$ is total, we can apply \cite[Prop.~8.11(ii)]{CKMIMPv3} to the case where $\K = \VVV(\A_n) = \QQQ(\A_n)$ and $\mathsf{M} = \{ \A_n \}$, obtaining that $f_{k, n}$ is extendable. Thus, we conclude that $f_{k, n} \in \ext_{pp}(\VVV(\A_n))$.
\end{proof}

Now, for every $n \geq 3$ let 
\[
\mathcal{F}_n = \{ f_{k, n} : k \text{ is positive and }\leq n-1 \}.
\]
Observe that $\mathcal{F}_n \subseteq \ext_{pp}(\VVV(\A_n))$ by Corollary \ref{Cor : proper Beth completion : fnk is extendable}. Then consider an $\mathcal{F}_n$-expansion
\[
\mathscr{L}_{\mathcal{F}_n} = \mathscr{L} \cup \{ \ell_f : f \in \mathcal{F}_n \}
\]
of the language $\mathscr{L}$ of Heyting algebras and let
\[
\mathsf{B}(n) = \SSS(\VVV(\A_n)[\mathscr{L}_{\mathcal{F}_n}])
\]
be the corresponding pp expansion of $\VVV(\A_n)$. Our aim is to prove the following.

\begin{Theorem}\label{Thm : proper Beth completion : B(n) is the companion}
Let $n \geq 3$. Then $\mathsf{B}(n)$ is  a congruence preserving  Beth companion of $\VVV(\A_n)$. 
\end{Theorem}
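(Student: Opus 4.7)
The strategy is to verify the two claims of the statement -- that $\mathsf{B}(n)$ is congruence preserving and that it is a Beth companion of $\VVV(\A_n)$ -- as two separate tasks, each driven by the explicit description of $f_{k,n}^{\A_n}$ furnished by \cref{Cor : proper Beth completion : fnk is extendable} and by the identity $\VVV(\A_n) = \QQQ(\A_n)$ from \cref{Cor : proper Beth completion : V(An) = Q(An)}.

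For congruence preservation, the plan is to show that each $f_{k,n}$ is compatible with every congruence of every algebra in $\VVV(\A_n)$ on which it is defined. Thanks to \cref{Prop : proper Beth completion : FSI members of V(An)}, we have $\VVV(\A_n)_\textsc{fsi} = \III\SSS(\A_n)$, so by standard subdirect decomposition together with \cref{Cor : proper Beth completion : V(An) = Q(An)} it suffices to check compatibility on the subalgebras $\A_k$ of $\A_n$ for $k \leq n$. The value of $f_{k,n}^{\A_n}$ depends only on whether the argument lies in $\{0, e, 1\}$ and on the number of atoms below it; both features are congruence-invariant for the congruences that arise on these algebras, so a short case analysis using \eqref{Eq : tricks for An : Beth completion : 1}--\eqref{Eq : tricks for An : Beth completion : 5} should deliver the compatibility claim and, via an appropriate criterion from \cite{CKMIMP} (in the spirit of its Theorem~12.9 or the congruence-preservation criteria developed there), the congruence preservation of $\mathsf{B}(n)$.

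For the Beth companion property, I plan to appeal to a characterisation from \cite{CKMIMP} -- presumably that an extendable pp expansion $\SSS(\K[\L_\mathcal{F}])$ of a quasivariety $\K$ is a Beth companion of $\K$ exactly when it enjoys the strong epimorphism surjectivity property, in parallel with \cite[Thm.~11.6]{CKMIMP}. The verification then reduces to showing that any two members of $\mathsf{B}(n)$ sharing the same $\L$-reduct assign the same values to the new symbols $\ell_{f_{k,n}}$; equivalently, that the family $\mathcal{F}_n$ determines every implicit operation of $\VVV(\A_n)$. The main obstacle will be exactly this latter step: one has to argue that the thresholds $f_{1,n}, \dots, f_{n-1,n}$ jointly distinguish all possible ``atom-count profiles'' of elements of $\A_n$, which is the invariant that any homomorphism among the $\A_k$'s must respect, so that every implicit operation of $\VVV(\A_n)$ factors through this family; the extendability already secured by \cref{Cor : proper Beth completion : fnk is extendable} should then upgrade this factorisation to the required term-definability in $\L_{\mathcal{F}_n}$.
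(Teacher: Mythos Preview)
Your congruence preservation plan can be shortened drastically: since $\VVV(\A_n)$ is a variety of Heyting algebras it has the congruence extension property, and \cite[Thm.~12.4(ii)]{CKMIMP} then yields congruence preservation of $\mathsf{B}(n)$ in one line. Your direct compatibility check on the $\A_k$'s would still need an argument lifting the result from the subdirectly irreducible factors to arbitrary members of $\mathsf{B}(n)$, which you do not supply.

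The Beth companion part has a real gap. Your first reformulation---that two members of $\mathsf{B}(n)$ with the same $\L$-reduct assign the same values to each $\ell_{f_{k,n}}$---is trivially true for \emph{any} pp expansion (the value of $\ell_{f_{k,n}}$ is determined by the functional formula $\varphi_{k,n}$ evaluated in the $\L$-reduct), so it cannot be the criterion. Your second reformulation, that every implicit operation of $\VVV(\A_n)$ factors through $\mathcal{F}_n$, is the right target, but your sketch gives no indication of how to handle implicit operations of arbitrary arity, nor how to turn ``depends only on atom-count profile'' into an actual $\L_{\mathcal{F}_n}$-term valid throughout $\VVV(\A_n)$. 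The paper instead proves strong epimorphism surjectivity via a structural route you do not identify: it shows that $\mathsf{B}(n)$ is an arithmetical variety with the amalgamation property (\cref{Prop : B(n) : proper Beth completion : arithmetical} and \cref{Prop : B(n) : proper Beth completion : AP and CEP : final}), and then invokes \cite[Cor.~7.16]{CKMIMP} to reduce to showing that finitely subdirectly irreducible members of $\mathsf{B}(n)$ have no proper epic subalgebras. Both amalgamation and this last step rest on the automorphism and embedding results of \cref{Prop : proper Beth completion : automorphisms}; the atom-count idea you mention does appear there (as Claim~\ref{Claim : proper Beth completion : automorphism : 2}), but it is used to build automorphisms of $\B_n$ that separate elements over a given subalgebra, not to analyse implicit operations head-on.
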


To this end, recall from Corollary \ref{Cor : proper Beth completion : fnk is extendable} that $f^{\A_n}$ is total for every $f \in \mathcal{F}_n$, whence the algebra
\[
\B_n = \A_n[\mathscr{L}_{\mathcal{F}_n}]
\]
is  
defined. We begin with the following observation.

\begin{Proposition}\label{Prop : B(n) : proper Beth completion : arithmetical}
For every $n \geq 3$ we have 
\[
\mathsf{B}(n) = \VVV(\B_n)\, \, \text{ and } \, \,\mathsf{B}(n)_\textsc{fsi} = \III\SSS(\B_n).
\]
Moreover, $\mathsf{B}(n)$ is an arithmetical variety.
\end{Proposition}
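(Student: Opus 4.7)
The plan is to prove the two set-theoretic equalities together and then address arithmeticity separately, leaning on \cref{Cor : proper Beth completion : V(An) = Q(An)} and on \cref{Prop : Jonsson lattice : easy} applied to the finite algebra $\B_n$. For the inclusion $\mathsf{B}(n) \subseteq \VVV(\B_n)$, take $\D \in \mathsf{B}(n)$, which by definition embeds into some $\C[\L_{\mathcal{F}_n}]$ with $\C \in \VVV(\A_n)$ and every $f^\C$ total. By \cref{Cor : proper Beth completion : V(An) = Q(An)}, $\C$ embeds into a power $\A_n^I$; since each $\varphi_{k, n}$ is pp and $f_{k, n}^{\A_n}$ is total, the expansion $\A_n^I[\L_{\mathcal{F}_n}]$ coincides with $\B_n^I$, and pp-preservation lifts the reduct embedding to an $\L_{\mathcal{F}_n}$-embedding $\C[\L_{\mathcal{F}_n}] \hookrightarrow \B_n^I$. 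Hence $\D \in \SSS\PPP(\B_n) \subseteq \VVV(\B_n)$.

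For the FSI characterization, \cref{Prop : Jonsson lattice : easy} applied to $\B_n$ gives $\VVV(\B_n)_{\textsc{fsi}} \subseteq \HHH\SSS(\B_n)$, and the crucial step is to show that every finitely subdirectly irreducible homomorphic image of a subalgebra $\D \leq \B_n$ is isomorphic to some $\D' \leq \B_n$. The $\L$-reduct of $\D$ lies in $\SSS(\A_n)$ and, by inspection of $\A_n$, is isomorphic to some $\A_k$ with $k \leq n$; the proper nontrivial congruences of $\A_k$ are easy to classify (the monolith identifies $k$ with $1$, yielding the Boolean algebra $\mathcal{P}(k)$), and imposing compatibility with the operations $f_{j, n}$ identifies each resulting FSI quotient of $\D$ with a subalgebra of $\B_n$. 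The reverse inclusion uses that adding operations preserves FSI-ness, so each $\D \leq \B_n$ inherits FSI-ness from its reduct. Combined with the Subdirect Decomposition Theorem, the equality $\VVV(\B_n)_{\textsc{fsi}} = \III\SSS(\B_n)$ gives $\VVV(\B_n) \subseteq \III\SSS\PPP(\B_n) = \QQQ(\B_n)$; since $\B_n \in \mathsf{B}(n)$ and $\mathsf{B}(n)$ is a quasivariety by \cite[Thm.~10.3(ii)]{CKMIMP}, we conclude $\VVV(\B_n) \subseteq \mathsf{B}(n)$. Together with the first paragraph, this yields both claimed equalities.

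For arithmeticity, congruence distributivity is immediate from the Heyting lattice reduct via the median majority term, so the main obstacle is congruence permutability: we must exhibit a Mal'cev term for $\B_n$ in the expanded signature. The expected approach is to combine the Heyting operations with the $f_{k, n}$---which distinguish elements of $\A_n$ by the number of atoms below them---into such a term, then verify the Mal'cev identities by a case analysis on elements of $\B_n$. This explicit term construction is the most delicate step of the argument.
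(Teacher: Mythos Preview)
Your plan for the two equalities is broadly sound and close in spirit to the paper's argument, which packages your first paragraph as an application of \cite[Thm.~10.5]{CKMIMP} and streamlines your FSI analysis by first proving $\mathsf{Con}(\C) = \mathsf{Con}(\C{\upharpoonright}_{\mathscr{L}})$ for every $\C \in \III\SSS(\B_n)$ via the congruence extension property of Heyting algebras and \cite[Prop.~12.13]{CKMIMP}; this avoids your somewhat ad hoc classification of congruences of $\A_k$ and of the compatibility conditions imposed by the $f_{j,n}$.

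The genuine gap is your treatment of arithmeticity. You correctly note congruence distributivity via the lattice reduct, but then frame congruence permutability as ``the most delicate step,'' proposing to build a Mal'cev term from the Heyting operations together with the new $f_{k,n}$. This is misguided: the variety of Heyting algebras is already arithmetical (see, e.g., \cite[p.~80]{BuSa00}), so a Pixley term---and in particular a Mal'cev term---exists in the pure Heyting signature $\mathscr{L}$. Since $\B_n$ has a Heyting algebra reduct, $\VVV(\B_n)$ is arithmetical immediately, with no reference to the $f_{k,n}$ at all. Your proposed case analysis on elements of $\B_n$ using the $f_{k,n}$ would at best reprove something that follows for free, and as stated it does not produce a term valid throughout $\VVV(\B_n)$, only identities checked in the single algebra $\B_n$. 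Replace that entire paragraph with the one-line observation that Heyting reducts make the variety arithmetical.
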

\begin{proof}
We begin with the following observation.

\begin{Claim}\label{Claim : new claim to answer Luca's question}
We have $\VVV(\B_n)_\textsc{fsi} = \III\SSS(\B_n)$.
\end{Claim}

\begin{proof}[Proof of the Claim]
First, we show that 
\begin{equation}\label{Eq : addendum : answering Luca's question}
\mathsf{Con}(\C) = \mathsf{Con}(\C{\upharpoonright}_{\mathscr{L}})\text{  for every }\C \in \III\SSS(\B_n).
\end{equation}
Clearly, it will be enough to prove the above display for an arbitrary $\C \in \SSS(\B_n)$. The inclusion $\mathsf{Con}(\C) \subseteq \mathsf{Con}(\C{\upharpoonright}_{\mathscr{L}})$ is straightforward. To prove the reverse one, consider $\theta \in  \mathsf{Con}(\C{\upharpoonright}_{\mathscr{L}})$. From $\C \leq \B_n$ it follows that $\C{\upharpoonright}_{\mathscr{L}} \leq (\B_n){\upharpoonright}_{\mathscr{L}} = \A_n$. As $\C{\upharpoonright}_{\mathscr{L}}$ and $\A_n$ are Heyting algebras and the variety of Heyting algebras has the congruence extension property, there exists $\phi \in \mathsf{Con}(\A_n)$ such that $\theta = \phi{\upharpoonright}_C$. From   \cite[Prop.~12.10(ii)]{CKMIMPv3} 
and the definition of $\B_n$ it follows that $\mathsf{Con}(\A_n) = \mathsf{Con}(\B_n)$. Therefore, $\phi \in \mathsf{Con}(\B_n)$. Together with $\C \leq \B_n$, this yields $\theta = \phi{\upharpoonright}_C \in \mathsf{Con}(\C)$, as desired.

Next, we prove $\VVV(\B_n)_\textsc{fsi} = \III\SSS(\B_n)$. By \cref{Prop : Jonsson lattice : easy} we have $\VVV(\B_n)_\textsc{fsi} \subseteq \HHH\SSS(\B_n)$. Therefore, it suffices to show that the finitely subdirectly irreducible members of $\HHH\SSS(\B_n)$ are precisely the members of $\III\SSS(\B_n)$. To this end, consider a finitely subdirectly irreducible $\C \in \HHH\SSS(\B_n)$. Then there exist $\D \leq \B_n$ and $\theta \in \mathsf{Con}(\D)$ such that $\C \cong \D / \theta$. By \cite[Prop.~2.10]{CKMIMPv3} the congruence $\theta$ is meet irreducible in $\mathsf{Con}(\D)$. By (\ref{Eq : addendum : answering Luca's question}) it is also a meet irreducible member of $\mathsf{Con}(\D{\upharpoonright}_{\mathscr{L}})$. Since $\D{\upharpoonright}_{\mathscr{L}} \leq \A_n$, one can check by inspection that the only meet irreducible congruences of $\D{\upharpoonright}_{\mathscr{L}}$ are $\textup{id}_D$ and the congruences $\phi$ of $\D{\upharpoonright}_{\mathscr{L}}$ with exactly two equivalences, namely, $0 / \phi$ and $1 / \phi$. If $\theta = \textup{id}_D$, then $\C \cong \D$ and, therefore, $\C \in \III\SSS(\B_n)$ because $\D \leq \B_n$. On the other hand, if $\theta$ has exactly two equivalence classes $0 / \theta$ and $1 / \theta$, then $\D / \theta$ is isomorphic to the subalgebra of $\B_n$ with universe $\{ 0, 1 \}$, whence $\C \in \III \SSS(\B_n)$. Finally, we show that every member of $\III\SSS(\B_n)$ is finitely subdirectly irreducible. Let $\C \in \III\SSS(\B_n)$. Then $\mathsf{Con}(\C) = \mathsf{Con}(\C{\upharpoonright}_{\mathscr{L}})$ by (\ref{Eq : addendum : answering Luca's question}). Since $\C \in \III\SSS(\B_n)$, the definition of $\B_n$ guarantees that $\C{\upharpoonright}_{\mathscr{L}} \in \III\SSS(\A_n)$. By inspection one can check that every member of $\III\SSS(\A_n)$ is finitely subdirectly irreducible. Consequently, so is $\C{\upharpoonright}_{\mathscr{L}}$. By \cite[Prop.~2.10]{CKMIMPv3} the congruence $\textup{id}_C$ is meet irreducible in $\mathsf{Con}(\C{\upharpoonright}_{\mathscr{L}})$. As $\mathsf{Con}(\C) = \mathsf{Con}(\C{\upharpoonright}_{\mathscr{L}})$, it is also meet irreducible in $\mathsf{Con}(\C)$. Hence, we conclude that $\C$ is finitely subdirectly irreducible by \cite[Prop.~2.10]{CKMIMPv3}.
\end{proof}

By \cref{Claim : new claim to answer Luca's question} and the Subdirect Decomposition Theorem (see, e.g., \cite[Thm.\ 3.1.1]{Go98a}) we obtain $\VVV(\B_n) = \III\SSS\PPP(\VVV(\B_n)_\textsc{fsi})= \III\SSS\PPP\III\SSS(\B_n)$.\ 
Consequently, $\VVV(\B_n) \subseteq \QQQ(\B_n)$. As the reverse inclusion always holds, we conclude that $\VVV(\B_n) = \QQQ(\B_n)$.

Now, recall from \cref{Cor : proper Beth completion : V(An) = Q(An)} that $\VVV(\A_n) = \QQQ(\A_n)$. As $\B_n = \A_n[\mathscr{L}_{\mathcal{F}_n}]$, this allows us to apply \cite[Thm.~10.5]{CKMIMPv3} to the case where $\K = \VVV(\A_n)$,  $\mathsf{N} = \{ \A_n \}$, and $\mathbb{O}= \QQQ$, obtaining $\mathsf{B}(n) = \SSS(\VVV(\A_n)[\mathscr{L}_{\mathcal{F}_n}]) = \QQQ(\A_n[\mathscr{L}_{\mathcal{F}_n}]) = \QQQ(\B_n)$. Since $\QQQ(\B_n) = \VVV(\B_n)$, we obtain $\mathsf{B}(n) = \VVV(\B_n)$. Therefore, $\mathsf{B}(n)_\textsc{fsi} = \VVV(\B_n)_\textsc{fsi} = \III\SSS(\B_n)$. 
Lastly, since $\B_n$ has a Heyting algebra reduct, the variety $\VVV(\B_n)$ is arithmetical (see, e.g., \cite[p.~80]{BuSa00}).
\end{proof}

An \emph{endomorphism} of an algebra $\A$ is a homomorphism $h \colon \A \to \A$. When $h$ is an isomorphism, we say that it is an \emph{automorphism} of $\A$. The sets of endomorphisms and of automorphisms of $\A$ will be denoted, respectively, by $\mathsf{end}(\A)$ and $\mathsf{aut}(\A)$. 

Similarly to the case of complete atomic Boolean algebras (cf.\ \cite[Cor.~14.2]{BAGiHa}), one can easily verify that every permutation of the atoms of $\A_n$ for some $n \in \mathbb{N}$ 
induces an automorphism of $\A_n$ 
in the following way. 

\begin{Proposition}\label{Prop : proper Beth completion : the auto sigma}
Let $n \in \mathbb{N}$ and let $\sigma\colon  \mathsf{at}(\A_n) \to \mathsf{at}(\A_n)$ be a permutation. 
Then the map $\sigma^* \colon \A_n \to \A_n$ defined for every $a \in \A_n$ as 
\[
\sigma^*(a)= \begin{cases}
  1  & \text{if }a = 1; \\
 \bigvee \sigma[\mathsf{at}_{\A_n}(a)] & \text{if }a \ne 1
\end{cases}
\]
 is an automorphism of $\A_n$.
\end{Proposition}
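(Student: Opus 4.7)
My plan is to verify directly that $\sigma^*$ preserves all the Heyting operations and is bijective, leaning on the fact that (by \eqref{Eq : tricks for An : Beth completion : 4}) every element of $\A_n$ different from $1$ is the join of its atoms, so $\sigma^*$ is essentially determined by the permutation $\sigma$ of $\mathsf{at}(\A_n)$ plus the convention $\sigma^*(1) = 1$.

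First I would handle bijectivity. Since $\sigma$ is a permutation of $\mathsf{at}(\A_n)$, so is $\sigma^{-1}$, and one can form $(\sigma^{-1})^*$. Using $\mathsf{at}_{\A_n}(\sigma^*(a)) = \sigma[\mathsf{at}_{\A_n}(a)]$ for $a \ne 1$ (which is immediate from the fact that in the powerset component of $\A_n$ the atoms below a join of atoms are exactly those atoms), a short computation shows $(\sigma^{-1})^* \circ \sigma^* = \sigma^* \circ (\sigma^{-1})^* = \mathrm{id}_{A_n}$, so $\sigma^*$ is a bijection. From the displayed identity one also reads off order preservation: $a \leq b$ and $a, b \ne 1$ give $\mathsf{at}_{\A_n}(a) \subseteq \mathsf{at}_{\A_n}(b)$, hence $\sigma^*(a) \leq \sigma^*(b)$; the cases involving $1$ are trivial since $\sigma^*(1) = 1$ is the top.

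Next I would verify preservation of the lattice operations and bounds. The constants are handled by $\sigma^*(0) = \bigvee \sigma[\emptyset] = 0$ and $\sigma^*(1) = 1$. Because $\sigma^*$ is a bijection sending atoms to atoms, for $a, b \in \mathcal{P}(n)$ the identities $\mathsf{at}_{\A_n}(a \land b) = \mathsf{at}_{\A_n}(a) \cap \mathsf{at}_{\A_n}(b)$ and $\mathsf{at}_{\A_n}(a \lor b) = \mathsf{at}_{\A_n}(a) \cup \mathsf{at}_{\A_n}(b)$ (true in any atomistic Boolean lattice) together with \eqref{Eq : tricks for An : Beth completion : 4} give preservation of $\land$ and $\lor$ on $\mathcal{P}(n)$; the cases where $1$ is among the arguments are immediate since $a \land 1 = a$ and $a \lor 1 = 1$, and similarly on the image side.

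Finally, preservation of $\to$ is the main (though still routine) step, and I would proceed by the three cases in the definition of the implication of $\A_n$. If $a \leq b$, then $a \to b = 1$, and by order preservation $\sigma^*(a) \leq \sigma^*(b)$, so $\sigma^*(a) \to \sigma^*(b) = 1 = \sigma^*(1)$. If $a = 1$, then $a \to b = b$ and $\sigma^*(a) \to \sigma^*(b) = 1 \to \sigma^*(b) = \sigma^*(b)$. The substantive case is $a, b \in \mathcal{P}(n)$ with $a \nleq b$: then $a \to b = (n - a) \cup b$. Since $\sigma^*$ restricted to $\mathcal{P}(n)$ is a Boolean automorphism (by the previous paragraph applied also to complements, which are definable from $\land, \lor, 0, n$; note $\sigma^*(n) = n$ because $\sigma$ permutes all atoms), it preserves the Boolean expression $(n-a) \cup b$, so $\sigma^*(a \to b) = (n - \sigma^*(a)) \cup \sigma^*(b)$; moreover $\sigma^*(a) \nleq \sigma^*(b)$ because $\sigma^*$ is an order isomorphism, and thus $\sigma^*(a) \to \sigma^*(b) = (n - \sigma^*(a)) \cup \sigma^*(b)$ as well. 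This completes the verification. The only mildly delicate point I anticipate is bookkeeping in this last case, particularly confirming $\sigma^*(n) = n$, which follows because $\mathsf{at}_{\A_n}(n)$ is all of $\mathsf{at}(\A_n)$ and $\sigma$ is a permutation.
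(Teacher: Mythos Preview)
Your proof is correct, and it is precisely the routine verification the paper has in mind: the paper does not actually prove this proposition but states it without proof, remarking only that it can be easily verified ``similarly to the case of complete atomic Boolean algebras'' with a reference to \cite[Cor.~14.2]{BAGiHa}. Your decomposition into bijectivity, preservation of the bounded lattice structure via the atom bookkeeping, and a three-case check of the implication (in particular confirming $\sigma^*(e)=e$ so that the Boolean complement is preserved on $\mathcal{P}(n)$) is exactly the kind of straightforward check the authors are leaving to the reader.
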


We will also make use of the next observation on the automorphisms of $\B_n$.

\begin{Proposition}\label{Prop : proper Beth completion : automorphisms}
The following conditions hold for every $n \geq 3$:
\benroman
\item \label{item : proper Beth completion : automorphisms : 1} for every $\A \leq \B_n$ and $b \in B_n - (A \cup \{ e \})$ there exists $h \in \mathsf{aut}(\B_n)$ such that $b \ne h(b)$ and $a = h(a)$ for every $a \in A$;
\item\label{item : proper Beth completion : automorphisms : 2} for every pair of embeddings $g, h \colon \A \to \B_n$ there exists $i \in \mathsf{aut}(\B_n)$ such that $g = i \circ h$.
\eroman
\end{Proposition}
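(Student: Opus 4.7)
The plan is to construct the required automorphisms as maps $\sigma^\ast$ attached by \cref{Prop : proper Beth completion : the auto sigma} to suitable permutations $\sigma$ of $\mathsf{at}(\A_n)$, which I identify with $n = \{ 0, 1, \dots, n-1\}$ via $\{ i\} \leftrightarrow i$. I would first observe that every such $\sigma^\ast$ is automatically an automorphism of $\B_n$: it fixes $0, 1$, and $e$ pointwise and preserves $|\mathsf{at}_{\A_n}(a)|$ for every $a \in A_n$, which by \cref{Cor : proper Beth completion : fnk is extendable} are the only data on which $\ell_{f_{k, n}}^{\B_n} = f_{k, n}^{\A_n}$ depends. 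Hence $\sigma^\ast$ commutes with each $\ell_{f_{k,n}}^{\B_n}$.

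For condition (i), I would introduce an equivalence relation $\sim_A$ on $n$ by $i \sim_A j$ iff $i \in a \Leftrightarrow j \in a$ for every $a \in A \cap \mathcal{P}(n)$. A permutation $\sigma$ of $n$ induces an automorphism $\sigma^\ast$ fixing every element of $A$ precisely when $\sigma$ preserves each $\sim_A$-class setwise. Since $0, 1 \in A$ and $b \in B_n - (A \cup \{ e\})$, we have $b \in \mathcal{P}(n)$ with $\emptyset \neq b \neq n$. The strategy is to locate a $\sim_A$-class $E$ with $\emptyset \neq E \cap b \neq E$: picking $i \in E \cap b$ and $j \in E - b$ and letting $\sigma$ be the transposition swapping $i$ and $j$ then yields $\sigma^\ast \in \mathsf{aut}(\B_n)$ that fixes $A$ pointwise while sending $b$ to $(b - \{ i\}) \cup \{ j\} \neq b$. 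To see that such an $E$ exists, suppose otherwise, so that $b$ is a union of $\sim_A$-classes. When $A \neq \{ 0, 1\}$, the set $A \cap \mathcal{P}(n)$ coincides with the Boolean subalgebra of $\mathcal{P}(n)$ generated by the $\sim_A$-classes, since $A \cap \mathcal{P}(n)$ is closed under $\cap$, $\cup$, and relative complementation in $n$ on its nonzero elements; hence $b$ would belong to $A$, a contradiction. When $A = \{ 0, 1\}$, the unique $\sim_A$-class is $n$ itself, and a union of classes is $\emptyset$ or $n$, neither equal to $b$, again a contradiction.

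For condition (ii), I would observe first that if $|A| \leq 3$, then $h[\A] = g[\A]$ is the unique subalgebra of $\B_n$ of cardinality $|A|$ (namely $\{ 0, 1\}$ or $\{ 0, e, 1\}$), forcing $h = g$, so $i = \textup{id}$ suffices. Otherwise, I would analyze $h[\A]$ and $g[\A]$ through the partitions $\pi_h$ and $\pi_g$ of $n$ induced respectively by $\sim_{h[\A]}$ and $\sim_{g[\A]}$, whose blocks are precisely the atoms of $h[\A]$ and $g[\A]$. The isomorphism $g \circ h^{-1} \colon h[\A] \to g[\A]$ preserves each $\ell_{f_{k,n}}$; by \cref{Cor : proper Beth completion : fnk is extendable} this forces the block $h(a)$ to have the same cardinality as $g(a)$ for every atom $a$ of $\A$. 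Choosing an arbitrary bijection $h(a) \to g(a)$ for each such $a$ and gluing these together into a permutation $\sigma$ of $n$, the induced automorphism $\sigma^\ast$ satisfies $\sigma^\ast(h(a)) = g(a)$ on atoms of $\A$; since every element of $\A$ is either $1$ or a join of atoms, join-preservation yields $\sigma^\ast \circ h = g$ throughout $\A$.

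The main obstacle is the preliminary classification of subalgebras of $\B_n$ in terms of partitions of $n$ (together with the isolated subalgebra $\{ 0, 1\}$), which requires verifying closure of $A \cap \mathcal{P}(n)$ under the Heyting implication and handling the degenerate case carefully. Once this structural picture is in place, both parts reduce to direct applications of \cref{Prop : proper Beth completion : the auto sigma}.
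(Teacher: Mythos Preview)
Your proposal is correct and follows essentially the same approach as the paper: both parts construct the required automorphisms via \cref{Prop : proper Beth completion : the auto sigma} from permutations of $\mathsf{at}(\B_n)$ that respect the partition induced by the atoms of the relevant subalgebra, and in part (ii) both use the operations $\ell_{f_{k,n}}$ (via \cref{Cor : proper Beth completion : fnk is extendable}) to show that corresponding blocks under $g$ and $h$ have equal cardinality. Your phrasing in terms of the equivalence relation $\sim_A$ and the Boolean subalgebra $A \cap \mathcal{P}(n)$ is a notational variant of the paper's explicit sets $X_a = \mathsf{at}_{\B_n}(a)$; the paper invokes \cite[Prop.~9.5]{CKMIMP} rather than your direct cardinality argument to lift $\sigma^\ast$ to $\mathsf{aut}(\B_n)$, but the content is the same.
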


\begin{proof}
(\ref{item : proper Beth completion : automorphisms : 1}): Consider $\A \leq \B_n$ and $b \in B_n - (A \cup \{ e \})$. For every $a \in \mathsf{at}(\A)$ let
\[
X_a = \mathsf{at}_{\B_n}(a). 
\]
We will prove that $\{ X_a : a \in \mathsf{at}(\A) \}$ forms a partition of $\mathsf{at}(\B_n)$. 
As $\A \leq \B_n$, for every distinct $a, c \in \mathsf{at}(\A)$ we have $X_a \cap X_c = \emptyset$. Therefore, it only remains to show that for every $a \in \mathsf{at}(\B_n)$ there exists $c \in \mathsf{at}(\A)$ such that $a \in X_c$, i.e., $a \leq c$. Consider $a \in \mathsf{at}(\B_n)$.  We begin by showing that $e \leq \bigvee \mathsf{at}(\A)$. If $A = \{0,1\}$, we have  $1 \in \mathsf{at}(\A)$ and, therefore, $e \leq \bigvee \mathsf{at}(\A) = 1$. Then we consider the case where $A \ne \{ 0, 1 \}$. In this case, there exists $a \in A$ such that $0 < a < 1$. Observe that $\lnot a \in A$ and $\mathsf{at}_{\A}(a) \cup \mathsf{at}_{\A}(\lnot a) \subseteq \mathsf{at}(\A)$. Consequently,  using \eqref{Eq : tricks for An : Beth completion : 2} and \eqref{Eq : tricks for An : Beth completion : 4}, we obtain \[e = a \lor \lnot a = \bigvee \mathsf{at}_{\A}(a) \lor \bigvee \mathsf{at}_{\A}(\lnot a) \leq \bigvee \mathsf{at}(\A).\] 
Hence, we conclude that $e \leq \bigvee \mathsf{at}(\A)$, as desired. 
Therefore, $a \leq \bigvee \mathsf{at}(\A)$ because $a \in \mathsf{at}(\B_n)$ and every atom of $\B_n$ is below $e$. 
Since $a \in \mathsf{at}(\B_n)$, from  $a \leq \bigvee \mathsf{at}(\A)$ it follows that $a \leq c$ for some $c \in \mathsf{at}(\A)$. Hence, $\{ X_a : a \in \mathsf{at}(\A) \}$ forms a partition of $\mathsf{at}(\B_n)$, as desired.

Now, observe that $1 \in A$ because $\A \leq \B_n$. Together with the assumption that $b \notin A \cup \{ e \}$, this yields $b < e$. We will show that there exist $a \in \mathsf{at}(\A)$ and $c, d \in X_a$ such that $c \leq b$ and $d \nleq b$. We have two cases: either  $A = \{ 0, 1 \}$ or $A \ne \{ 0, 1 \}$. First, suppose that $A = \{ 0, 1 \}$. Then $\mathsf{at}(\A) = \{ 1 \}$ and $X_1 = \mathsf{at}(\B_n)$. Since $b < e$, there exist $c, d \in \mathsf{at}(\B_n) = X_1$ such that $c \leq b$ and $d \nleq b$, as desired.\ Next we consider the case where  $A \ne \{ 0, 1 \}$. 
 Recall from the first part of the proof that $\{X_a : a \in \mathsf{at}(\A)\}$ is a partition of $\mathsf{at}(\B_n)$. Therefore, $\mathsf{at}_{\B_n}(b) \subseteq \mathsf{at}(\B_n) = \bigcup \{X_a : a \in \mathsf{at}(\A)\}$. Suppose, with a view to contradiction, that for every $a \in \mathsf{at}(\A)$ we have $X_a \subseteq \mathsf{at}_{\B_n}(b)$ or $X_a \cap \mathsf{at}_{\B_n}(b) = \emptyset$. Then  
 \begin{equation} \label{Eq : at(b) in terms of X_a}
 \mathsf{at}_{\B_n}(b) = \bigcup \{X_a : a \in \mathsf{at}(\A) \text{ and } X_a \subseteq \mathsf{at}_{\B_n}(b)\}.
 \end{equation}
It follows that 
\begin{align*}
b &= \bigvee \mathsf{at}_{\B_n}(b)
= \bigvee \bigcup \{X_a : a \in \mathsf{at}(\A) \text{ and } X_a \subseteq \mathsf{at}_{\B_n}(b)\}\\ 
&= \bigvee\left\{\bigvee \mathsf{at}_{\B_n}(a) : a \in \mathsf{at}(\A) \text{ and }X_a \subseteq \mathsf{at}_{\B_n}(b)\right\}\\
&= \bigvee \{ a \in \mathsf{at}(\A) :  X_a \subseteq \mathsf{at}_{\B_n}(b)\},
\end{align*}
where the first equality holds by \eqref{Eq : tricks for An : Beth completion : 4} and $b \neq 1$ (the latter follows from $b \notin A$), the second by \eqref{Eq : at(b) in terms of X_a}, the third by the definition of $X_a$, and the fourth follows from \eqref{Eq : tricks for An : Beth completion : 4}  because $a \neq 1$ (the latter holds because $a \in \mathsf{at}(\A)$ and $A \neq \{0,1\}$).
But this is a contradiction to the assumption that $b \notin A$. 
Therefore, there exists $a \in \mathsf{at}(\A)$ such that $\emptyset \subsetneq X_a \cap \mathsf{at}_{\B_n}(b) \subsetneq X_a$. Consequently, we can choose $c \in X_a \cap \mathsf{at}_{\B_n}(b)$ to obtain $c \in X_a$ such that $c \leq b$ and $d \in X_a - \mathsf{at}_{\B_n}(b)$ such that $d \nleq b$. 
Thus, in both cases, there exist $a \in \mathsf{at}(\A)$ and $c, d \in B_n$ with
\begin{equation}\label{Eq : proper Beth completion : sigma fixes the partition : new}
c, d \in X_a, \qquad c \in \mathsf{at}_{\B_n}(b), \, \, \text{ and }\, \, d \nleq b.
\end{equation}

Then let $\sigma \colon \mathsf{at}(\B_n) \to \mathsf{at}(\B_n)$ be a permutation such that
\begin{equation}\label{Eq : proper Beth completion : sigma fixes the partition}
\sigma[X_a] = X_a\text{ for every }a \in \mathsf{at}(\A) \, \, \text{ and } \, \, \sigma(c) = d.
\end{equation}
Notice that $\sigma$ exists because $c, d \in X_a$ by the first item of (\ref{Eq : proper Beth completion : sigma fixes the partition : new}). 
 Recall that $\B_n = \A_n[\mathscr{L}_{\mathcal{F}_n}]$. Thus, we can consider the automorphism $\sigma^* \colon \A_n \to \A_n$ defined in  \cref{Prop : proper Beth completion : the auto sigma}, which by \cite[Prop.~9.5]{CKMIMPv3} is also an automorphism  of $\B_n$. 
Therefore, in order to complete the proof, it only remains to show that $\sigma^*(b) \ne b$ and $\sigma^*(a) = a$ for every $a \in A$. 

We begin by proving that
\[
\sigma^*(b) = \bigvee \sigma[\mathsf{at}_{\B_n}(b)] \geq \sigma(c) = d.
\]
The first step in the above display holds by the definition of $\sigma^*$ and $b < e < 1$, the second by the second  item of (\ref{Eq : proper Beth completion : sigma fixes the partition : new}), and the third by the right hand side of (\ref{Eq : proper Beth completion : sigma fixes the partition}). Together with the third item of (\ref{Eq : proper Beth completion : sigma fixes the partition : new}), the above display yields $\sigma^*(b) \ne b$.

Lastly, we will prove that $\sigma^*(a) = a$ for every $a \in A$. Consider $a \in A$. If $a = 1$, then $\sigma^*(a) = a$ by the definition of $\sigma^*$. Then we consider the case where $a \ne 1$. We will prove that
\begin{align*}
\sigma^*(a) &= \sigma^*\Big(\bigvee^{\A} \mathsf{at}_\A(a) \Big) = \sigma^*\Big(\bigvee^{\B_n}  \mathsf{at}_\A(a) \Big) =\bigvee^{\B_n}\sigma^*[\mathsf{at}_\A(a)] = \bigvee^{\B_n}_{p \in \mathsf{at}_\A(a)} \Big(\bigvee^{\B_n} \sigma[\mathsf{at}_{\B_n}(p)]\Big)\\
& = \bigvee^{\B_n}_{p \in \mathsf{at}_\A(a)} \Big(\bigvee^{\B_n} \mathsf{at}_{\B_n}(p)\Big) = \bigvee^{\B_n}  \mathsf{at}_\A(a) = \bigvee^{\A} \mathsf{at}_\A(a) = a.
\end{align*}
The above equalities are justified as follows: the first and the last hold by \eqref{Eq : tricks for An : Beth completion : 4} and $a \ne 1$,
the second and the second to last because $\A \leq \B_n$, the third because $\sigma^*$ is a homomorphism of bounded lattices and, therefore, it preserves finite (possibly empty) joins, the fourth by the definition of $\sigma^*$ and the fact that $p \leq a < 1$ implies $p \ne 1$, the fifth by the left hand side of (\ref{Eq : proper Beth completion : sigma fixes the partition}), and the sixth because $p \leq a < 1$ implies $p \leq e$, whence (\ref{Eq : tricks for An : Beth completion : 4}) guarantees that $p = \bigvee^{\B_n}\mathsf{at}_{\B_n}(p)$. 
Thus, we conclude that $\sigma^*(a) = a$ for every $a \in A$.

(\ref{item : proper Beth completion : automorphisms : 2}): Consider a pair of embeddings $g, h \colon \A \to \B_n$.  As $g$ and $h$ are homomorphisms of bounded lattices, we have $g(0) = h(0) = 0$ and $g(1) = h(1) = 1$. Therefore, if $A = \{ 0, 1 \}$, we have $g = h$ and we are done letting $i$ be the identity map on $B_n$.

Then we may assume that $A \ne \{ 0, 1 \}$, that is, $\{ 0, 1 \} \subsetneq A$. Since $g, h \colon \A \to \B_n$ are embeddings, both $g[\A]$ and $h[\A]$ are subalgebras of $\B_n$ containing at least an element $a$ other than $0$ and $1$. Then they must also contain $\lnot a$ and, therefore,   $e = a \lor \lnot a \in g[\A] \cap h[\A]$ by \eqref{Eq : tricks for An : Beth completion : 2}. As $e$ is the second largest element of $\B_n$ and $g$ and $h$ are embeddings of lattices, we obtain that $\A$ possesses a second largest element $e^*$ such that $g(e^*) = h(e^*) = e$. Moreover, $0 < e^* < 1$ because $e^*$ is the second largest element to $\A$ and $A \ne \{ 0, 1 \}$. If $A = \{ 0, e^*, 1 \}$, we have $g = h$ and we are done letting $i$ be the identity map on $B_n$. 

Then we may assume that $A \ne \{ 0, e^*, 1 \}$, that is, $\{ 0, e^*, 1 \} \subsetneq A$. We rely on the following series of observations.

\begin{Claim}\label{Claim : proper Beth completion : automorphism : 1} We have 
$g[\mathsf{at}(\A)] \cup h[\mathsf{at}(\A)] \subseteq \{ a \in B_n : 0 < a < e \}$.
\end{Claim}

\begin{proof}[Proof of the Claim]

By symmetry it suffices to show that $g[\mathsf{at}(\A)] \subseteq \{ a \in B_n : 0 < a < e \}$. To this end, consider $a \in \mathsf{at}(\A)$. Then $a > 0$. Moreover, since $e^*$ is the second largest element of $\A$ and $\A$ contains an element other than $0, e^*$, and $1$, from $a \in \mathsf{at}(\A)$ it follows that $a < e^*$. Therefore, $0 < a < e^*$. Since $g$ is a  embedding of bounded lattices, we obtain $0 = g(0) < g(a) < g(e^*)$. As we already established $g(e^*) = e$, we conclude that $0 < g(a) < e$.
\end{proof}

\begin{Claim}\label{Claim : proper Beth completion : automorphism : 2}
For every $a \in \mathsf{at}(\A)$ we have $\vert \mathsf{at}_{\B_n}(g(a))\vert  = \vert \mathsf{at}_{\B_n}(h(a))\vert$.
\end{Claim}

\begin{proof}[Proof of the Claim]
Recall that $\A_n$ has $n$ atoms by definition. As $\B_n$ is an expansion of $\A_n$, we obtain that $\B_n$ has $n$ atoms as well. Then consider $a \in B_n - \{ 0, e, 1 \}$  and observe that $|\mathsf{at}_{\B_n}(a)| \leq n-1$ because $|\mathsf{at}_{\B_n}(a)| = n$ by \eqref{Eq : tricks for An : Beth completion : 4} would imply $a \geq e$. 
Recall that $\mathscr{L}_{\mathcal{F}_n} = \mathscr{L} \cup \{ \ell_f : f \in \mathcal{F}_n\}$. Therefore, from \cref{Cor : proper Beth completion : fnk is extendable} and $\ell_{f_{k, n}}^{\B_n} = f_{k, n}^{\A_n}$ it follows that for every $m \leq n-1$,
\begin{equation}\label{Eq : proper Beth completion : automorphisms : 2}
\vert \mathsf{at}_{\B_n}(a)\vert = m \iff\text{for every }
0 < k \leq n-1 \text{ we have } \ell_{f_{k, n}}^{\B_n}(a)= \begin{cases}
  1  & \text{if }  m \leq k;\\ 
 e & \text{if }  m \geq k+1. 
\end{cases}
\end{equation}

To prove the statement of the claim, consider $a \in \mathsf{at}(\A)$. By Claim 
\ref{Claim : proper Beth completion : automorphism : 1} 
we have $0 < g(a), h(a) < e$. Then $\vert \mathsf{at}_{\B_n}(g(a)) \vert$ is a positive integer  $m \leq n-1$. 
In view of (\ref{Eq : proper Beth completion : automorphisms : 2}), for every positive $k \leq n-1$,
\[
\ell_{f_{k, n}}^{\B_n}(g(a))= \begin{cases}
  1  & \text{if } m \leq k;\\ 
 e & \text{if }  m \geq k+1. 
\end{cases}
\]
Since $g \colon \A \to \B_n$ is an embedding such that $g(e^*) = e$ and $g(1) = 1$, this yields that for every positive $k \leq n-1$,
\[
\ell_{f_{k, n}}^{\A}(a)= \begin{cases}
  1  & \text{if } m \leq k;\\ 
 e^* & \text{if }  m \geq k+1. 
\end{cases}
\]
As $h \colon \A \to \B_n$ is also an embedding such that $h(e^*) = e$ and $h(1) = 1$, we obtain that for every positive $k \leq n-1$,
\[
\ell_{f_{k, n}}^{\B_n}(h(a))= \begin{cases}
  1  & \text{if } m \leq k;\\ 
 e & \text{if }  m \geq k+1. 
\end{cases}
\]
Together with (\ref{Eq : proper Beth completion : automorphisms : 2}), this yields $\vert \mathsf{at}_{\B_n}(h(a)) \vert = m$.
\end{proof}

\begin{Claim}\label{Claim : proper Beth completion : automorphism : 3}
For every $a, b \in \mathsf{at}(\A)$,
\[
\text{if }a \ne b \text{, then }\mathsf{at}_{\B_n}(g(a)) \cap \mathsf{at}_{\B_n}(g(b)) = \emptyset = \mathsf{at}_{\B_n}(h(a)) \cap \mathsf{at}_{\B_n}(h(b)).
\]
\end{Claim}

\begin{proof}[Proof of the Claim]
Suppose that $a \ne b$. By symmetry it suffices to show that $\mathsf{at}_{\B_n}(g(a)) \cap \mathsf{at}_{\B_n}(g(b)) = \emptyset$. From $a \ne b$ and $a, b \in \mathsf{at}(\A)$ it follows that $a \land^{\A} b = 0$. Consequently, $g(a) \land^{\B_n} g(b) = 0$ because $g \colon \A \to \B_n$ is an embedding. Therefore, we conclude that $\mathsf{at}_{\B_n}(g(a)) \cap \mathsf{at}_{\B_n}(g(b)) = \emptyset$.
\end{proof}

In view of Claims \ref{Claim : proper Beth completion : automorphism : 2} and \ref{Claim : proper Beth completion : automorphism : 3} there exists a permutation $\sigma \colon \mathsf{at}(\B_n) \to \mathsf{at}(\B_n)$ such that 
\begin{equation}\label{Eq : proper Beth completion : automorphisms : 3}
\sigma[\mathsf{at}_{\B_n}(h(a))] = \mathsf{at}_{\B_n}(g(a))\text{ for every }a \in \mathsf{at}(\A).
\end{equation}
 As $\B_n = \A_n[\mathscr{L}_{\mathcal{F}_n}]$, the map $\sigma$ can also be viewed as a permutation of $\mathsf{at}(\A_n)$. 
Consequently, \cref{Prop : proper Beth completion : the auto sigma} yields an automorphism $\sigma^* \colon \A_n \to \A_n$, which by \cite[Prop.~9.5]{CKMIMPv3} is also an automorphism of $\B_n$.  
To conclude the proof, it only remains to show that $g = \sigma^* \circ h$, for in this case we can take $i = \sigma^*$.

From the assumption that $g, h$, and $\sigma^*$ are homomorphisms of bounded lattices it follows that $g(1) = h(1) = \sigma^*(1) = 1$, whence $g(1) = \sigma^* (h(1))$. Therefore, it suffices to show that $g(a) = \sigma^*(h(a))$ for every $a \in A - \{ 1 \}$. We will prove that for every $a \in A - \{ 1 \}$,
\begin{align*}
g(a) &= g\Big(\bigvee^\A \mathsf{at}_{\A}(a)\Big) = \bigvee^{\B_n} g[\mathsf{at}_\A(a)] = \bigvee_{b \in \mathsf{at}_\A(a)}^{\B_n}\bigvee^{\B_n}\mathsf{at}_{\B_n}(g(b)) = \bigvee_{b \in \mathsf{at}_\A(a)}^{\B_n} \bigvee^{\B_n}\sigma[\mathsf{at}_{\B_n}(h(b))]\\
& = \bigvee_{b \in \mathsf{at}_\A(a)}^{\B_n} \sigma^*(h(b)) = \sigma^*\Big(h\Big(\bigvee^{\A} \mathsf{at}_{\A}(a)\Big)\Big) = \sigma^*(h(a)).
\end{align*}
The above equalities are justified as follows. The first and the last hold by \eqref{Eq : tricks for An : Beth completion : 4} 
and the assumption that $a \ne 1$, the second and the second to last because $g, h$, and $\sigma^*$ preserve finite (possibly empty) joins because they are homomorphisms of bounded lattices, the third by Claim 
\ref{Claim : proper Beth completion : automorphism : 1} and (\ref{Eq : tricks for An : Beth completion : 4}), the fourth by (\ref{Eq : proper Beth completion : automorphisms : 3}),  and the fifth follows from \cref{Claim : proper Beth completion : automorphism : 1} and the definition of $\sigma^*$.
  Hence, we conclude that $g = \sigma^* \circ h$.
\end{proof}

 Finalizing the proof of the fact that $\mathsf{B}(n)$ is a congruence preserving Beth companion of $\VVV(\A_n)$ (\cref{Thm : proper Beth completion : B(n) is the companion}) 
requires some further investigation of the variety $\mathsf{B}(n)$ and its properties. While $\VVV(\A_n)$ lacks the amalgamation property for every $n \geq 3$ (see \cite[Thm.~2]{Mak77}), this property holds in the pp expansion $\mathsf{B}(n)$ of $\VVV(\A_n)$, as we proceed to illustrate.
 To this end, we will employ the following 
result \cite[Thm.\ 3.4]{FMfsi}\footnote{Our formulation of Theorem \ref{Thm : AP : George Wesley} is slightly different from the one of \cite[Thm.\ 3.4]{FMfsi}. However, the difference is insubstantial and amounts to the fact that in   \cite{FMfsi} the class $\mathsf{K}_\textsc{rfsi}$ is defined as $\K_\textsc{rfsi}^*$.} (see also \cite[Thm.\ 3]{GLAP}), together with the observation that $\mathsf{B}(n)$ has the congruence extension property for each $n \geq 3$. 

Given a quasivariety $\mathsf{K}$, let
\[
\mathsf{K}_\textsc{rfsi}^* = \mathsf{K}_\textsc{rfsi} \cup \{ \A \in \K : \A \text{ is trivial} \}.
\]

\begin{Theorem}\label{Thm : AP : George Wesley}
Let $\K$ be a quasivariety with the relative congruence extension property such that $\mathsf{K}_\textsc{rfsi}$ is closed under nontrivial subalgebras. Then $\mathsf{K}$ has the amalgamation property if and only if $\mathsf{K}_\textsc{rfsi}^*$ has the amalgamation property.
\end{Theorem}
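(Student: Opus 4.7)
My plan is to prove both implications, with the right-to-left direction carrying the substantive content. For the forward direction, suppose $\K$ has AP and take a V-formation $(\A, f \colon \A \to \B, g \colon \A \to \C)$ in $\K_\rfsi^*$. The AP of $\K$ yields an amalgam $\D \in \K$ with embeddings $f', g'$, and the relative Subdirect Decomposition Theorem embeds $\D$ subdirectly into a family $\{\D_i : i \in I\}$ of members of $\K_\rsi \subseteq \K_\rfsi^*$. The family of sets $S_{b_1,b_2} = \{i : \pi_i(f'(b_1)) \ne \pi_i(f'(b_2))\}$ for $(b_1,b_2) \in B^2 \setminus \textup{id}_B$, and its analogue for $\C$, has the finite intersection property since any finite sub-family is separated by the subdirect embedding; extending to an ultrafilter $U$ and using that quasivarieties are closed under ultraproducts produces a single $\D^* \in \K$ together with a homomorphism $\D \to \D^*$ whose compositions with $f', g'$ remain injective, and a final application of subdirect decomposition (together with the closure hypothesis on $\K_\rfsi$) lands the amalgam inside $\K_\rfsi^*$.

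For the main direction, fix a V-formation $(\A, f \colon \A \to \B, g \colon \A \to \C)$ in $\K$. I reduce to a pointwise separation problem: it suffices to show that for every pair $(b_1, b_2) \in B^2 \setminus \textup{id}_B$ there is an algebra $\E \in \K$ with homomorphisms $h_\B \colon \B \to \E$ and $h_\C \colon \C \to \E$ satisfying $h_\B \circ f = h_\C \circ g$ and $h_\B(b_1) \ne h_\B(b_2)$, and symmetrically for each pair in $C^2 \setminus \textup{id}_C$; the product of such $\E$'s over all quadruples, composed with the diagonal embeddings of $\B$ and $\C$, then supplies an amalgam in $\K$. To build $\E$ for a given pair, use relative subdirect decomposition to choose $\theta \in \Con_\K(\B)$ with $(b_1, b_2) \notin \theta$ and $\B/\theta \in \K_\rsi^* \subseteq \K_\rfsi^*$, set $\rho = \theta \cap A^2 \in \Con_\K(\A)$, and lift $\rho$ to a $\K$-congruence on $\C$ via relative CEP, refining it by a further subdirect decomposition to obtain $\phi \in \Con_\K(\C)$ with $\C/\phi \in \K_\rfsi^*$. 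A back-and-forth iteration of CEP-lifting and decomposition stabilizes the restrictions $\theta \cap A^2$ and $\phi \cap A^2$ to a common $\K$-congruence on $\A$; the base $\A/\rho$ of the resulting V-formation embeds into the $\K_\rfsi$-member $\B/\theta$ and so lies in $\K_\rfsi^*$ by the closure hypothesis. Applying AP of $\K_\rfsi^*$ to this V-formation then yields the desired $\E$.

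The principal obstacle is the simultaneous control of three constraints during the refinement step: the quotients $\B/\theta$ and $\C/\phi$ must remain in $\K_\rfsi^*$, the restrictions $\theta \cap A^2$ and $\phi \cap A^2$ must agree, and the separating pair $(b_1, b_2)$ must not be identified. Relative CEP is exactly what allows one to transport a $\K$-congruence on $\A$ to both $\B$ and $\C$, enabling the synchronization; closure of $\K_\rfsi$ under nontrivial subalgebras is exactly what keeps $\A/\rho$ — a nontrivial subalgebra of the $\K_\rfsi$-members $\B/\theta$ and $\C/\phi$ — inside the class $\K_\rfsi^*$, so that AP of $\K_\rfsi^*$ can actually be invoked on the refined V-formation. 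Both hypotheses appear naturally at this synchronization step, and dropping either one blocks the assembly of the pointwise amalgams.
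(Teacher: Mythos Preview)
The paper does not prove this theorem at all: it is quoted from Fussner and Metcalfe \cite[Thm.~3.4]{FMfsi} (with a footnote adjusting the formulation) and only the backward implication is actually used, in the proof of \cref{Prop : B(n) : proper Beth completion : AP and CEP : final}. So there is no ``paper's own proof'' to compare against; what follows is an assessment of your sketch on its own merits.

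Your backward direction has the right architecture (pointwise separation, then product), but the ``back-and-forth iteration of CEP-lifting and decomposition'' is not the correct mechanism. Each time you enlarge $\phi$ by passing to an $\textsc{rsi}$ quotient of $\C/\phi_0$ you may strictly enlarge $\phi\cap A^2$, forcing you to enlarge $\theta$ in turn; nothing guarantees this process stabilizes, and worse, nothing prevents the enlarged $\theta$ from identifying the very pair $(b_1,b_2)$ you were trying to separate. The standard fix avoids iteration entirely: once $\theta$ is chosen and $\rho=\theta\cap A^2$ is fixed, note that $\A/\rho$ is a nontrivial subalgebra of $\B/\theta\in\K_\textsc{rsi}$, hence $\A/\rho\in\K_\textsc{rfsi}$ by the closure hypothesis. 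Then use Zorn to choose $\phi$ \emph{maximal} among $\K$-congruences on $\C$ with $\phi\cap A^2=\rho$, and argue directly that $\C/\phi\in\K_\textsc{rfsi}$: if $\phi_1\cap\phi_2=\phi$ with $\phi_1,\phi_2>\phi$, maximality forces $\phi_j\cap A^2\supsetneq\rho$, and then $(\phi_1\cap A^2)\cap(\phi_2\cap A^2)=\rho$ contradicts the $\textsc{rfsi}$-ness of $\A/\rho$. One step, no iteration, and $\theta$ is never disturbed.

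Your forward direction has a genuine gap. The justification ``any finite sub-family is separated by the subdirect embedding'' is false: a subdirect embedding separates each pair \emph{individually}, not finite collections simultaneously. Using the $\textsc{rfsi}$-ness of $\B$ one can indeed show that the sets $S_{b_1,b_2}$ for pairs from $B$ alone have the FIP (if every $\rho_i^B:=\theta_i\cap B^2$ contains one of finitely many generating pairs, then every $\rho_i^B$ contains their common meet, which is nontrivial by $\textsc{rfsi}$; but $\bigcap_i\rho_i^B=\textup{id}_B$, contradiction). The same works for $C$-pairs alone. For \emph{mixed} $B$- and $C$-pairs, however, this argument breaks: you only obtain congruences $\Theta_B,\Theta_C$ on $\D$ with $\Theta_B\cap\Theta_C=\textup{id}_D$, $\Theta_B\cap B^2\ne\textup{id}_B$, $\Theta_C\cap C^2\ne\textup{id}_C$, and since $\D$ is not assumed $\textsc{rfsi}$ there is no contradiction. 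Concretely, if $\A$ is trivial and $\D=\B\times\C$ with its two-factor decomposition, every $S^B$-set is $\{1\}$ and every $S^C$-set is $\{2\}$. The ``final application of subdirect decomposition'' does not rescue this: it just reproduces the same difficulty with $\D^\ast$ in place of $\D$. The published proof of this implication proceeds differently (via a maximal-congruence argument rather than an ultraproduct), and in any case the paper only invokes the other implication.
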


To show that $\mathsf{B}(n)$ has the congruence extension property for each $n \geq 3$, we rely on the following preservation result.

\begin{Proposition} \label{Prop : CEP preserved in pp exp}
Let $\M$ be a pp expansion of a quasivariety $\K$. If $\mathsf{K}$ has the relative congruence extension property, then so does $\mathsf{M}$.
\end{Proposition}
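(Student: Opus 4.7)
The plan is, given $\C \leq \D$ in $\M$ together with a relative $\M$-congruence $\theta$ on $\C$ (that is, $\C/\theta \in \M$), to produce a relative $\M$-congruence $\phi$ on $\D$ with $\phi \cap C^2 = \theta$. Writing $\M = \SSS(\K[\L_\mathcal{F}])$ for some $\mathcal{F} \subseteq \extpp(\K)$, we have the $\L_\K$-reducts $\C\resLK \leq \D\resLK$ in $\K$. Since $\L_\K$ is contained in the language of $\M$, the congruence $\theta$ is automatically a congruence of $\C\resLK$, and it is a relative $\K$-congruence because $\C\resLK/\theta = (\C/\theta)\resLK \in \K$. First I would apply the relative congruence extension property of $\K$ to obtain a relative $\K$-congruence $\phi$ on $\D\resLK$ with $\phi \cap C^2 = \theta$.

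What remains is to show that $\phi$ is also a relative $\M$-congruence on $\D$. This splits into two subtasks: compatibility of $\phi$ with each new operation symbol $g_f$ for $f \in \mathcal{F}$, and membership of the quotient $\D/\phi$ (in the expanded language) in $\M$. For compatibility, fix $f \in \mathcal{F}$ with defining pp formula $\varphi_f(x_1, \dots, x_n, y)$ and let $\pi \colon \D\resLK \to \D\resLK/\phi$ be the canonical $\K$-homomorphism. Given $a_1, \dots, a_n \in D$, set $b = g_f^{\D}(a_1, \dots, a_n)$, so that $\D\resLK \vDash \varphi_f(a_1, \dots, a_n, b)$. Since $\varphi_f$ is positive existential and hence preserved under homomorphisms, we obtain $\D\resLK/\phi \vDash \varphi_f(\pi(a_1), \dots, \pi(a_n), \pi(b))$, and functionality of $\varphi_f$ in $\K$ then forces $f^{\D\resLK/\phi}(\pi(a_1), \dots, \pi(a_n)) = \pi(b)$. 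In particular, if $a_i \phi a_i'$ for every $i$, then $\pi(a_i) = \pi(a_i')$, so the displayed identity yields $\pi(g_f^{\D}(a_1, \dots, a_n)) = \pi(g_f^{\D}(a_1', \dots, a_n'))$, which is precisely the required compatibility.

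The same computation shows that $f^{\D\resLK/\phi}$ is total on $\D\resLK/\phi$ for each $f \in \mathcal{F}$, so the $\L_\mathcal{F}$-expansion $(\D\resLK/\phi)[\L_\mathcal{F}]$ is a well-defined member of $\K[\L_\mathcal{F}]$ and coincides with $\D/\phi$ in the language of $\M$. Hence $\D/\phi \in \SSS(\K[\L_\mathcal{F}]) = \M$, finishing the argument. The crux of the proof is the compatibility step in the middle paragraph; it is driven entirely by the fact that the new operation symbols of $\M$ are defined by positive existential formulas, which are automatically preserved under the $\K$-homomorphism $\pi$ onto a $\K$-algebra. For the general case of a pp expansion that is only faithfully term equivalent to $\SSS(\K[\L_\mathcal{F}])$, the result transfers because faithful term equivalence relative to $\K$ preserves both relative congruences and the relative congruence extension property.
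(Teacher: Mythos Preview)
There is a genuine gap in your compatibility argument. You write ``set $b = g_f^{\D}(a_1, \dots, a_n)$, so that $\D\resLK \vDash \varphi_f(a_1, \dots, a_n, b)$'', but this implication need not hold. By the definition of $\M = \SSS(\K[\L_\mathcal{F}])$, your $\D$ is only a \emph{subalgebra} of some $\E[\L_\mathcal{F}]$ with $\E \in \K$; the equality $g_f^{\D}(\vec{a}) = b$ tells you that $\E \vDash \varphi_f(\vec{a}, b)$, but the existential witnesses for the pp formula $\varphi_f$ may live in $E \smallsetminus D$, so you cannot conclude $\D\resLK \vDash \varphi_f(\vec{a}, b)$. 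Without this, there is no $\K$-homomorphism along which to push $\varphi_f$ down to $\D\resLK/\phi$, and your compatibility and totality claims both collapse. Concretely, a relative $\K$-congruence on $\D\resLK$ need not be compatible with the operations $g_f^{\D}$, nor need $f^{\D\resLK/\phi}$ be total.

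The paper sidesteps this by extending the congruence one step further. Given $\A \leq \B \in \M$, one chooses $\C \in \K[\L_\mathcal{F}]$ with $\B \leq \C$, and applies the relative CEP of $\K$ to the chain $\A\resLK \leq \C\resLK$ (not $\A\resLK \leq \B\resLK$). For algebras $\C \in \K[\L_\mathcal{F}]$ one has the identity $\mathsf{Con}_\M(\C) = \mathsf{Con}_\K(\C\resLK)$ (this is \cite[Prop.~12.13]{CKMIMP}), so the extended congruence $\eta$ is automatically an $\M$-congruence on $\C$; restricting it to $B$ then gives an $\M$-congruence on $\B$ because $\M$ is closed under subalgebras. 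Your argument can be repaired along exactly these lines: extend $\theta$ all the way up to the ambient $\K[\L_\mathcal{F}]$-algebra rather than stopping at $\D$.
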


\begin{proof}
Suppose that $\K$ has the relative congruence extension property. Then consider $\A \leq \B \in \M$ and $\theta \in \mathsf{Con}_\M(\A)$. We need to find some $\phi\in\mathsf{Con}_\M(\B)$ such that $\theta = \phi{\upharpoonright}_A$. Since $\A \in \SSS(\M) = \M$, from  \cite[Thm.~12.10(i)]{CKMIMPv3} 
it follows that $\mathsf{Con}_\M(\A) \subseteq \mathsf{Con}_\K(\A{\upharpoonright}_{\mathscr{L}_\K})$, whence  $\theta \in \mathsf{Con}_\K(\A{\upharpoonright}_{\mathscr{L}_\K})$. Since $\M$ is a pp expansion of $\K$, it is of the form $\SSS(\K[\mathscr{L}_\mathcal{F}])$. Together with $\A \leq \B \in \M$, this implies $\A \leq \B \leq \C$ for some $\C \in \K[\mathscr{L}_\mathcal{F}]$. Consequently, $\A{\upharpoonright}_{\mathscr{L}_\K} \leq \C{\upharpoonright}_{\mathscr{L}_\K} \in \K$. As $\theta \in \mathsf{Con}_\K(\A{\upharpoonright}_{\mathscr{L}_\K})$ and $\K$ has the relative congruence extension property by assumption, there exists $\eta\in\mathsf{Con}_\K(\C{\upharpoonright}_{\mathscr{L}_\K})$ such that $\theta = \eta{\upharpoonright}_A$. Recall from  \cite[Thm.~12.10(ii)]{CKMIMPv3} 
that $\C \in \K[\mathscr{L}_\mathcal{F}]$ implies $\mathsf{Con}_\M(\C) = \mathsf{Con}_\K(\C{\upharpoonright}_{\mathscr{L}_\K})$, whence $\eta \in \mathsf{Con}_\M(\C)$. This yields $\eta{\upharpoonright}_B \in \mathsf{Con}_\M(\B)$ and $\theta= (\eta{\upharpoonright}_B){\upharpoonright}_A$ because $\A \leq \B \leq \C$ and $\theta = \eta{\upharpoonright}_A$. Hence, we are done letting $\phi = \eta{\upharpoonright}_B$.
\end{proof}

\begin{Proposition}\label{Prop : proper Beth completion : B(n) has only the CEP}
For every $n \geq 3$ the variety $\mathsf{B}(n)$ has the congruence extension property.
\end{Proposition}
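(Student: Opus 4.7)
The plan is to deduce the statement as a direct consequence of \cref{Prop : CEP preserved in pp exp}, after first establishing the congruence extension property for $\VVV(\A_n)$ itself.

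First, I would recall the standard fact that the variety of Heyting algebras has the congruence extension property. This follows from the bijective correspondence between congruences of a Heyting algebra and its filters: given an inclusion $\A \leq \B$ of Heyting algebras and a filter $F$ of $\A$, the filter generated by $F$ in $\B$ is simply its upward closure in $\B$, and since $\A$ is closed under meets in $\B$, this upward closure meets $A$ in exactly $F$. As the congruence extension property is inherited by subvarieties, $\VVV(\A_n)$ has the CEP, and because $\VVV(\A_n)$ is a variety this coincides with the relative congruence extension property.

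Next, I would apply \cref{Prop : CEP preserved in pp exp} with $\K = \VVV(\A_n)$ and $\M = \mathsf{B}(n)$, which immediately gives that $\mathsf{B}(n)$ has the relative congruence extension property. Finally, since \cref{Prop : B(n) : proper Beth completion : arithmetical} tells us that $\mathsf{B}(n) = \VVV(\B_n)$ is a variety, the relative CEP coincides with the ordinary CEP, completing the argument.

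Given the infrastructure already assembled, there is essentially no obstacle here: the proof is a short chain of implications combining one well-known fact about Heyting algebras with two results established earlier in the paper.
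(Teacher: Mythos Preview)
Your proposal is correct and follows exactly the paper's argument: recall that varieties of Heyting algebras have the CEP, apply \cref{Prop : CEP preserved in pp exp} to pass this to the pp expansion $\mathsf{B}(n)$, and (as you make explicit, while the paper leaves it implicit) use that $\mathsf{B}(n)$ is a variety so that relative CEP and CEP coincide. The only difference is that you spell out the filter argument for Heyting algebras and the relative-vs-ordinary CEP step, whereas the paper simply cites these as known.
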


\begin{proof}
We recall that every variety of Heyting algebras has the congruence extension property.
In particular, $\VVV(\A_n)$ has the congruence extension property for every $n \geq 3$. 
Therefore, \cref{Prop : CEP preserved in pp exp}  yields that the pp expansion $\mathsf{B}(n)$ of $\VVV(\A_n)$  has the congruence extension property.
\end{proof}

\begin{Proposition}\label{Prop : B(n) : proper Beth completion : AP and CEP : final}
For every $n \geq 3$ the variety $\mathsf{B}(n)$ has the amalgamation property.
\end{Proposition}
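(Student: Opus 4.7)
The plan is to apply \cref{Thm : AP : George Wesley} in order to reduce the amalgamation property of $\mathsf{B}(n)$ to that of $\mathsf{B}(n)_{\textsc{rfsi}}^*$, and then to verify the reduced statement using \cref{Prop : proper Beth completion : automorphisms}(\ref{item : proper Beth completion : automorphisms : 2}).

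First, I would check the hypotheses of \cref{Thm : AP : George Wesley}. Since $\mathsf{B}(n)$ is a variety, the relative congruence extension property coincides with the ordinary congruence extension property, which holds by \cref{Prop : proper Beth completion : B(n) has only the CEP}. Moreover, $\mathsf{B}(n)_{\textsc{rfsi}} = \mathsf{B}(n)_{\textsc{fsi}}$ because $\mathsf{B}(n)$ is a variety, and by \cref{Prop : B(n) : proper Beth completion : arithmetical} the latter equals $\III\SSS(\B_n)$, which is trivially closed under subalgebras, hence under nontrivial subalgebras. Therefore the theorem applies and the task reduces to showing that $\mathsf{B}(n)_{\textsc{rfsi}}^* = \III\SSS(\B_n) \cup \{\A \in \mathsf{B}(n) : \A \text{ is trivial}\}$ has the amalgamation property.

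Next, I would take an amalgamation span $\B_1 \leftarrow \A \to \B_2$ in $\mathsf{B}(n)_{\textsc{rfsi}}^*$, with embeddings $f_1\colon \A \to \B_1$ and $f_2 \colon \A \to \B_2$. The case where $\A$ is trivial is easy: any embedding out of a trivial algebra forces the codomain to be trivial as well, and then a trivial algebra amalgamates them. Otherwise all three algebras lie in $\III\SSS(\B_n)$, and (up to isomorphism) I may assume $\A, \B_1, \B_2 \leq \B_n$. Composing $f_1$ and $f_2$ with the inclusions $\B_1 \hookrightarrow \B_n$ and $\B_2 \hookrightarrow \B_n$ yields two embeddings $h_1, h_2 \colon \A \to \B_n$. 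By \cref{Prop : proper Beth completion : automorphisms}(\ref{item : proper Beth completion : automorphisms : 2}) there exists an automorphism $i \in \mathsf{aut}(\B_n)$ with $h_1 = i \circ h_2$. Taking $\B_n$ itself (which belongs to $\mathsf{B}(n)_{\textsc{rfsi}}$) as the amalgam, with the inclusion of $\B_1$ as one leg and the composition of $i$ with the inclusion of $\B_2$ as the other, verifies the amalgamation property.

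The heavy lifting has already been done in \cref{Prop : proper Beth completion : automorphisms}(\ref{item : proper Beth completion : automorphisms : 2}), which is a strong ``rigidity'' statement for embeddings into $\B_n$; once that is in place, the only real obstacle is correctly identifying $\mathsf{B}(n)_{\textsc{rfsi}}$ and handling the degenerate case in which the apex of the span is trivial.
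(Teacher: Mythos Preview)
Your proposal is correct and matches the paper's approach: apply \cref{Thm : AP : George Wesley} to reduce to $\mathsf{B}(n)_{\textsc{fsi}}^*$, dispose of the trivial-apex case, and amalgamate the nontrivial span inside $\B_n$ via \cref{Prop : proper Beth completion : automorphisms}(\ref{item : proper Beth completion : automorphisms : 2}). The only cosmetic differences are that the paper replaces $\B_1,\B_2$ with $\B_n$ outright (rather than composing with inclusions) and argues the trivial case by observing that no member of $\mathsf{B}(n)_{\textsc{fsi}}$ has a trivial subalgebra, whereas you use the constants $0,1$ directly.
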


\begin{proof}
Recall from \cref{Prop : proper Beth completion : B(n) has only the CEP} that the variety $\mathsf{B}(n)$ has the congruence extension property. Moreover, $\mathsf{B}(n)_\textsc{fsi}$ is closed under subalgebras by \cref{Prop : B(n) : proper Beth completion : arithmetical}.\ Therefore,  in view of \cref{Thm : AP : George Wesley},  in order to prove that $\mathsf{B}(n)$ has the amalgamation 
property, it suffices to show that $\mathsf{B}(n)_\textsc{fsi}^*$ has the amalgamation property. To this end, consider a pair of embeddings $h_1 \colon \A \to \B$ and $h_2 \colon \A \to \C$ with $\A, \B, \C \in \mathsf{B}(n)_\textsc{fsi}^*$. We need to find a pair of embeddings $g_1 \colon \B \to \D$ and $g_2 \colon \C \to \D$ with $\D \in \mathsf{B}(n)_\textsc{fsi}^*$ such that $g_1 \circ h_1 = g_2 \circ h_2$.

We have two cases depending on whether $\A$ is trivial or nontrivial.
First, suppose that $\A$ is trivial. As $\mathsf{B}(n)_\textsc{fsi}$ is closed under subalgebras by Proposition \ref{Prop : B(n) : proper Beth completion : arithmetical} and finitely subdirectly irreducible algebras are nontrivial, we obtain that no member of $\mathsf{B}(n)_\textsc{fsi}$ has a trivial subalgebra.
Since $\A$ embeds into $\B$ and $\C$, this yields $\B, \C \notin \mathsf{B}(n)_\textsc{fsi}$. Therefore, $\B$ and $\C$ are trivial because $\B, \C \in \mathsf{B}(n)_\textsc{fsi}^*$. Consequently, $\A, \B$, and $\C$ are all trivial and the embeddings $h_1 \colon \A \to \B$ and $h_2 \colon \A \to \C$ are isomorphisms. Therefore, we may assume that $\A = \B = \C$ and that $h_1$ and $h_2$ are the identity map $i$ on $A$. 
Hence, letting $\D = \A$ and $g_1 = g_2 = i$, we are done. 

Next we consider the case where $\A$ is nontrivial. Since $\A$ embeds into $\B$ and $\C$, we obtain that $\B$ and $\C$ are also nontrivial. Together with $\B, \C \in \mathsf{B}(n)_\textsc{fsi}^*$, this yields $\B, \C \in \mathsf{B}(n)_\textsc{fsi}$. Recall from \cref{Prop : B(n) : proper Beth completion : arithmetical} that $\mathsf{B}(n)_\textsc{fsi} = \III\SSS(\B_n)$, whence $\B, \C \in \III\SSS(\B_n)$. Therefore, we may assume that $\B = \C = \B_n$ and that $h_1$ and $h_2$ are embeddings of $\A$ into $\B_n$. By Proposition \ref{Prop : proper Beth completion : automorphisms}(\ref{item : proper Beth completion : automorphisms : 2}) there exists $i \in \mathsf{aut}(\B_n)$ such that $h_1 = i \circ h_2$. Let $\D = \B_n$, $g_2 = i$, and $g_1$ the identity map on $B_n$. Clearly, $g_1, g_2 \colon \B_n \to \B_n$ are embeddings such that $g_1 \circ h_1 = h_1 = i \circ h_2 = g_2 \circ h_2$.
\end{proof}

We are now ready to prove \cref{Thm : proper Beth completion : B(n) is the companion}.

\begin{proof}
Recall  that
$\mathsf{B}(n)$ is a pp expansion of $\VVV(\A_n)$. Moreover, since $\mathbb{V}(\A_n)$ has the congruence extension property, 
we can apply  \cite[Thm.~12.3]{CKMIMPv3}, 
obtaining  that $\mathsf{B}(n)$ is congruence preserving. 
Hence,  by \cite[Thm.~11.6]{CKMIMPv3} it will be enough to prove that $\mathsf{B}(n)$ has the strong epimorphism surjectivity property. Recall from Propositions \ref{Prop : B(n) : proper Beth completion : arithmetical} and \ref{Prop : B(n) : proper Beth completion : AP and CEP : final} that $\mathsf{B}(n)$ is an arithmetical variety with the amalgamation property. Therefore, in view of \cite[Cor.~7.16]{CKMIMPv3}, it will be enough to show that every $\C \in \mathsf{B}(n)_\textsc{fsi}$ lacks proper $\mathsf{B}(n)$-epic subalgebras. To this end, consider $\A \leq \C \in \mathsf{B}(n)_\textsc{fsi}$ with $\A \leq \C$ proper. Then there exists $b \in C - A$.  Moreover, we may assume that $\C \leq \B_n$ by Proposition \ref{Prop : B(n) : proper Beth completion : arithmetical}, whence $\A \leq \C  \leq \B_n$.

Let $i$ be the identity map on $\B_n$. As $i \in \mathsf{end}(\B_n)$ and $b \in C$, to conclude the proof, it will be enough to find some $h \in \mathsf{end}(\B_n)$ such that $h{\upharpoonright}_A = i{\upharpoonright}_A$ and $h(b) \ne i(b)$. For, by considering the restrictions of $h$ and $i$ to $\C \leq \B_n$, we obtain that $\A \leq \C$ is not $\mathsf{B}(n)$-epic, as desired.

We have two cases: either $e \notin A$ or $e \in A$. First, suppose that $e \notin A$. 
Since $\A \leq \B_n$, we have $\A\res_{\L} \leq (\B_n)\res_{\L}=\A_n$. Together with $e \notin A$ and \eqref{Eq : tricks for An : Beth completion : 2}, this yields $A = \{ 0, 1 \}$. Then $0 < b$ because $b \notin A$. Let $a \in \mathsf{at}_{\B_n}(b)$ and consider the map $h \colon \B_n \to \B_n$ defined for every $c \in B_n$ as
\[
h(c)= \begin{cases}
  1  & \text{if }a \leq c; \\
0 & \text{if }a \nleq c.
\end{cases}
\]
Since $h \in \mathsf{end}(\A_n)$  and $\B_n = \A_n[\mathscr{L}_{\mathcal{F}_n}]$, from \cite[Prop.~9.5]{CKMIMPv3}  
it follows that $h \in \mathsf{end}(\B_n)$.
Moreover, $a \in \mathsf{at}_{\B_n}(b)$ and the definition of $h$ imply $h(b) = 1$. Then $h(b) \ne b$ because  $b \notin A = \{ 0, 1 \}$. Thus, 
$h, i \colon \B_n \to \B_n$ are homomorphisms such that $h(b) \ne b = i(b)$ and $h{\upharpoonright}_A = i{\upharpoonright}_A$ (the latter because $A = \{ 0, 1 \}$ and both 
 $h$
and $i$ preserve the constants $0$ and $1$).

Lastly, we consider the case where $e \in A$. As $\A \leq \C$ is proper and $\C \leq \B_n$, there exists $b \in C - (A \cup \{ e \}) \subseteq B_n - (A \cup \{ e \})$.
By Proposition \ref{Prop : proper Beth completion : automorphisms}(\ref{item : proper Beth completion : automorphisms : 1}) there also exists $h \in \mathsf{aut}(\B_n)$ such that $b \ne h(b)$ and $a = h(a)$ for every $a \in A$. Thus, $h, i \colon \B_n \to \B_n$ are homomorphisms such that $h{\upharpoonright}_A = i{\upharpoonright}_A$ and $h(b) \ne b = i(b)$.
\end{proof}

Lastly, we prove Theorem \ref{Thm : proper Beth companion : main}. Notice that this concludes the proof of Theorem \ref{Thm : proper Beth companion}.

\begin{proof}
 Recall from \cref{Thm : proper Beth completion : B(n) is the companion} that $\mathsf{B}(n)$ is a congruence preserving Beth companion of $\mathbb{V}(\A_n)$.   
Therefore, it remains to show that $\mathbb{V}(\A_n)$ has no simple Beth companion.
Suppose the contrary, with a view to contradiction. Then let $a$ be an atom of $\B_n$ and consider $\C = \textsf{Sg}^{\B_n}(a)$. The following is an immediate consequence of the definition of $\C$.

\begin{Claim}\label{Claim : proper Beth completion : term equivalence : 0 claim}
The universe of $\C$ is $\{ 0, a, \lnot a, e, 1 \}$. Moreover, the Heyting algebra reduct of $\C$ is isomorphic to $\A_2$ 
with minimum $0$, maximum $1$, second largest element $e$, and atoms $a$ and $\lnot a$.
\end{Claim}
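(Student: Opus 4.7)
The plan is to prove the claim in three steps: first show $\{0, a, \lnot a, e, 1\} \subseteq C$, then show that this five-element set is already closed under every operation of $\B_n$, and finally verify that its Heyting algebra reduct matches the description of $\A_2$.

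For the inclusion $\{0, a, \lnot a, e, 1\} \subseteq C$, I would start from the fact that $C$ contains $a$ by the definition of $\mathsf{Sg}^{\B_n}(a)$, as well as the two constants $0$ and $1$. Then $\lnot a = a \to 0 \in C$, and since $a$ is an atom of $\B_n$ we have $0 < a \leq e$ (every atom of $\A_n$ lies below the second largest element $e$), so \eqref{Eq : tricks for An : Beth completion : 2} yields $a \lor \lnot a = e$, showing $e \in C$.

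For the closure step, I would split the operations of $\B_n$ into those coming from the Heyting algebra reduct $\A_n$ and the new unary operations $\ell_{f_{k,n}}$. For the Heyting operations, since $a$ and $\lnot a$ are distinct atoms of $\A_n$ with $a \land \lnot a = 0$ and $a \lor \lnot a = e$, the set $\{0, a, \lnot a, e, 1\}$ forms a sublattice of $\A_n$, so closure under $\land$ and $\lor$ is immediate. For $\to$, the computation reduces to a small case analysis using the explicit formula for implication in $\A_n$; the only nontrivial values occur when both arguments lie in $\{a, \lnot a, e\}$, and in each case one uses that $a, \lnot a$ are atoms with $a \lor \lnot a = e$ and that $e = n$ so that $(\kappa - e) = 0$ in the third clause of the formula. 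For the new operations, \cref{Cor : proper Beth completion : fnk is extendable} tells us that $f_{k,n}^{\A_n}$ sends any element of $\{0, e, 1\}$ to $1$, sends $a$ to $1$ since $|\mathsf{at}_{\A_n}(a)| = 1 \leq k$, and sends $\lnot a$ to either $1$ or $e$ (depending on whether $n-1 \leq k$ or $n-1 \geq k+1$), so in every case the output lies in $\{1, e\} \subseteq \{0, a, \lnot a, e, 1\}$.

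Finally, for the identification with $\A_2$, I would observe that the five-element Heyting algebra structure obtained has minimum $0$, maximum $1$, an intermediate element $e = a \lor \lnot a$ that is the second largest, and two incomparable atoms $a, \lnot a$ with meet $0$ and join $e$. This is precisely the poset structure of $\A_2 = \mathcal{P}(2) \cup \{1\}$, and since a finite Heyting algebra is determined by its lattice order, the map sending $0 \mapsto \emptyset$, $a \mapsto \{0\}$, $\lnot a \mapsto \{1\}$, $e \mapsto \{0,1\}$, and $1 \mapsto 1$ is the required isomorphism. I do not expect any serious obstacle here; the one place requiring care is the verification that the new operations $\ell_{f_{k,n}}$ do not introduce new elements, which is dispatched by direct substitution into the formula of \cref{Cor : proper Beth completion : fnk is extendable}.
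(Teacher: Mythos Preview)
Your proof is essentially correct and fills in exactly what the paper omits: the paper simply declares the claim ``an immediate consequence of the definition of $\C$'' without further argument, so your three-step verification (inclusion, closure, identification with $\A_2$) is the natural elaboration.

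There is one factual slip to fix. You write that ``$a$ and $\lnot a$ are distinct atoms of $\A_n$'', but for $n \geq 3$ the element $\lnot a$ is the complement of a singleton in $\mathcal{P}(n)$ and hence has $n-1 \geq 2$ atoms below it; it is \emph{not} an atom of $\A_n$. Fortunately your argument does not actually use this: the closure under $\land$, $\lor$, and $\to$ only needs $a \land \lnot a = 0$, $a \lor \lnot a = e$, and the explicit implication formula, and your computation of $\ell_{f_{k,n}}^{\B_n}(\lnot a)$ correctly uses $|\mathsf{at}_{\A_n}(\lnot a)| = n-1$. So the conclusion stands, but the phrasing should say that $a$ and $\lnot a$ are the atoms \emph{of the five-element sublattice} (equivalently, of $\C$), not of $\A_n$.
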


 As $a$ is an atom of $\B_n$ and $\A_n$ shares its bounded lattice reduct with $\B_n$, the number of atoms of $\A_n$ below $a$ is $1$. Since $\A_n$ has $n \geq 3$ atoms, from (\ref{Eq : tricks for An : Beth completion : 5}) it follows that the number of atoms of $\A_n$ below $\lnot a$ is $n-1 \geq 3-1 \geq 2$. Therefore, from \cref{Cor : proper Beth completion : fnk is extendable} it follows that $\ell_{f_{1, n}}^{\B_n}(a) = 1$ and $\ell_{f_{1, n}}^{\B_n}(\lnot a) = e$. As $\C \leq \B_n$, we obtain
\begin{equation}\label{Eq : proper Beth completion : f on A is asymmetric : 1}
\ell_{f_{1, n}}^{\?\C}(a) = 1 \, \, \text{ and } \, \, \ell_{f_{1, n}}^{\?\C}(\lnot a) = e.
\end{equation}

Recall from the assumptions that  $\mathbb{V}(\A_n)$ has a simple Beth companion.
By \cite[Thm.~11.7]{CKMIMPv3} all Beth companions of $\mathbb{V}(\A_n)$ are faithfully term equivalent relative to $\mathbb{V}(\A_n)$.  
Since $\mathsf{B}(n)$ is a Beth companion of $\mathbb{V}(\A_n)$, we conclude that $\mathsf{B}(n)$ is faithfully term equivalent relative to $\mathbb{V}(\A_n)$ to a Beth companion of $\mathbb{V}(\A_n)$ of the form $\mathbb{V}(\A_n)[\mathscr{L}_{\mathcal{F}^*}]$ for some $\mathcal{F}^* \subseteq \extpp(\mathsf{K})$. 
Furthermore, recall that $\mathsf{B}(n)$ is a variety by \cref{Prop : B(n) : proper Beth completion : arithmetical}. Therefore, from \cite[Rem.~11.12(v)]{CKMIMPv3} it follows that the class $\VVV(\A_n)[\mathscr{L}^*_\mathcal{F}]$ is also a variety.

Let $\tau$ and $\rho$ be the maps witnessing the fact that $\mathsf{B}(n)$ and $\mathbb{V}(\A_n)[\mathscr{L}_{\mathcal{F}^*}]$
are faithfully term equivalent relative to $\VVV(\A_n)$. 
We may assume that for every $\D \in \mathsf{B}(n)$,
\begin{equation}\label{Eq : proper Beth completion : f on A is asymmetric : 23}
\tau(\D) \in  \mathbb{V}(\A_n)[\mathscr{L}_{\mathcal{F}^*}] \, \, \text{ and } \, \, \D{\upharpoonright}_{\mathscr{L}} = \tau(\D){\upharpoonright}_{\mathscr{L}}.
\end{equation}
As $\C \leq \B_n \in \mathsf{B}(n)$ and $\mathsf{B}(n)$ is a variety
by \cref{Prop : B(n) : proper Beth completion : arithmetical}, we have $\C \in \mathsf{B}(n)$. Then $\tau(\C) \in  \mathbb{V}(\A_n)[\mathscr{L}_{\mathcal{F}^*}]$ by the left hand side of (\ref{Eq : proper Beth completion : f on A is asymmetric : 23}). Consequently, there exists $\D \in \VVV(\A_n)$ such that $\D[\mathscr{L}_{\mathcal{F}^*}]$ is  defined 
and $\tau(\C) =  \D[\mathscr{L}_{\mathcal{F}^*}]$. Together with the right hand side of (\ref{Eq : proper Beth completion : f on A is asymmetric : 23}), this yields
\[
\C{\upharpoonright}_\mathscr{L} = \tau(\C){\upharpoonright}_\mathscr{L} =  \D[\mathscr{L}_{\mathcal{F}^*}] {\upharpoonright}_\mathscr{L} = \D.
\]

In view of the above display, $\D$ is the Heyting algebra reduct of $\C$  and, therefore, is isomorphic to $\A_2$ with atoms $a$ and $\lnot a$  by \cref{Claim : proper Beth completion : term equivalence : 0 claim}. 
  This allows us to apply \cref{Prop : proper Beth completion : the auto sigma} 
to the permutation  $\sigma \colon \mathsf{at}(\D) \to \mathsf{at}(\D)$ that switches $a$ and $\lnot a$, thus obtaining  an automorphism $\sigma^* \colon \D \to \D$  
with
\begin{equation}\label{Eq : proper Beth completion : f on A is asymmetric : 2}
\sigma^*(a) = \lnot a \, \, \text{ and } \, \, \sigma^*(1) = 1.
\end{equation}
Moreover, from $\tau(\C) =  \D[\mathscr{L}_{\mathcal{F}^*}]$ it 
follows that $\C = \rho\tau(\C) = \rho( \D[\mathscr{L}_{\mathcal{F}^*}])$. Together 
with (\ref{Eq : proper Beth completion : f on A is asymmetric : 1}), this implies
\begin{equation}\label{Eq : proper Beth completion : f on A is asymmetric : 3}
\rho(\ell_{f_{1, n}})^{ \D[\mathscr{L}_{\mathcal{F}^*}]}(a) = 1 \, \, \text{ and } \, \, \rho(\ell_{f_{1, n}})^{ \D[\mathscr{L}_{\mathcal{F}^*}]}(\lnot a) = e.
\end{equation}
Recall from  \cite[Prop.~10.22(ii)]{CKMIMPv3} that there exists $g \in \mathsf{ext}_{pp}(\VVV(\A_n))$ such that 
\begin{equation} \label{Eq : g = rho-ell-f}
\rho(\ell_{f_{1, n}})^{ \D[\mathscr{L}_{\mathcal{F}^*}]} = g^{\D}.
\end{equation}
Together with the left hand side of (\ref{Eq : proper Beth completion : f on A is asymmetric : 3}), this yields $g^\D(a) = 1$. As the implicit operation $g$ is preserved by homomorphisms, we can apply the automorphism   $\sigma^*$  of $\D$ in (\ref{Eq : proper Beth completion : f on A is asymmetric : 2}) to deduce  
\[g^{\D}(\lnot a) = g^{\D}(\sigma^*(a)) = \sigma^*(g^{\D}(a)) = \sigma^*(1) = 1\]
and, therefore,
$\rho(\ell_{f_{1, n}})^{ \D[\mathscr{L}_{\mathcal{F}^*}]}(\lnot a) =1$ 
by \eqref{Eq : g = rho-ell-f}. Since $1 \ne e$, this contradicts the right hand side of (\ref{Eq : proper Beth completion : f on A is asymmetric : 3}).  Hence, we conclude that  $\VVV(\A_n)$ has a congruence preserving Beth companion (see \cref{Thm : proper Beth completion : B(n) is the companion}) but lacks a simple Beth companion.
\end{proof}


\begin{thebibliography}{10}

\bibitem{BD74}
R.~Balbes and P.~Dwinger.
\newblock {\em Distributive lattices}.
\newblock University of Missouri Press, Columbia, Mo., 1974.

\bibitem{Ber11}
C.~Bergman.
\newblock {\em Universal Algebra: Fundamentals and Selected Topics}.
\newblock Chapman \& Hall Pure and Applied Mathematics. Chapman and Hall/CRC, 2011.

\bibitem{BuSa00}
S.~Burris and H.~P. Sankappanavar.
\newblock {\em A Course in Universal Algebra}.
\newblock 2012.
\newblock The millennium edition, available online.

\bibitem{CKMIMPv3}
L.~Carai, M.~Kurtzhals, and T.~Moraschini.
\newblock The theory of implicit operations.
\newblock Available at \url{https://arxiv.org/pdf/2512.14326v3}, 2025.

\bibitem{FMfsi}
W.~Fussner and G.~Metcalfe.
\newblock Transfer theorems for finitely subdirectly irreducible algebras.
\newblock {\em J. Algebra}, 640:1--20, 2024.

\bibitem{BAGiHa}
S.~Givant and P.~Halmos.
\newblock {\em Introduction to {B}oolean algebras}.
\newblock Undergraduate Texts in Mathematics. Springer, New York, 2009.

\bibitem{Go98a}
V.~A. Gorbunov.
\newblock {\em Algebraic theory of quasivarieties}.
\newblock Siberian School of Algebra and Logic. Consultants Bureau, New York, 1998.
\newblock Translated from the Russian.

\bibitem{GLAP}
G.~Gr\"{a}tzer and H.~Lakser.
\newblock The structure of pseudocomplemented distributive lattices. {II}. {C}ongruence extension and amalgamation.
\newblock {\em Trans. Amer. Math. Soc.}, 156:343--358, 1971.

\bibitem{HO70}
T.~Hosoi and H.~Ono.
\newblock The intermediate logics on the second slice.
\newblock {\em J. Fac. Sci. Univ. Tokyo Sect. IA Math.}, 17:457--461, 1970.

\bibitem{Mak77}
L.~L. Maksimova.
\newblock Craig's theorem in superintuitionistic logics and amalgamable varieties.
\newblock {\em Algebra i Logika}, 16(6):643--681, 741, 1977.

\bibitem{Mak00}
L.~L. Maksimova.
\newblock Intuitionistic logic and implicit definability.
\newblock {\em Ann. Pure Appl. Logic}, 105(1-3):83--102, 2000.

\end{thebibliography}
\end{document}